\documentclass[a4paper,12pt]{article}

\usepackage{amssymb,enumitem}
\usepackage[title]{appendix}
\usepackage{tikz-cd}
\usepackage{tikz}
\usetikzlibrary{arrows.meta}
\usepackage{amsfonts,amsmath,amscd,amsthm,color,comment,cases}
\usepackage[all]{xy}
\usepackage[normalem]{ulem}
\usepackage[colorlinks]{hyperref}
\usepackage{verbatim}
\usepackage{graphicx} 
\usepackage{bm}
\usepackage{geometry}
\usepackage{hyperref}
\usepackage{xcolor}
\usepackage{booktabs}

\newfont{\eufm}{eufm10 scaled\magstep1}

\newcommand{\cC}{\mathcal{C}}
\newcommand{\cD}{\mathcal{D}}
\newcommand{\cE}{\mathcal{E}}

\newcommand{\cL}{\mathcal{L}}

\newcommand{\cR}{\mathcal{R}}
\newcommand{\cS}{\mathcal{S}}

\newcommand{\cN}{\mathcal{N}}
\newcommand{\cZ}{\mathcal{Z}}

\newcommand{\bbN}{\mathbb{N}}
\newcommand{\bbZ}{\mathbb{Z}}
\newcommand{\bbC}{\mathbb{C}}

\newcommand{\bbR}{\mathbb{R}}

\newcommand{\cn}{\textbf{cn}}
\newcommand{\tw}{\textrm{wr }}

\newcommand{\K}{\mathbf{K}}
\newcommand{\Spec}{\textrm{Spec}}
\newcommand{\coC}{\mathbf{C}}

\def\dres{\partial{\rm Res}}

\def\ord{\rm ord}

\def\Ker{\rm Ker}
\def\KdV{\rm KdV}
\def\kdv{\rm kdv}

\def\gcrd{\rm gcrd}

\def\ord{\rm ord}
\def\BC{\texttt{BC}}
\def\Ker{{\rm Ker}}

\def\KdV{\rm KdV}

\def\eM{e_{M}}
\def\D{\texttt{D}}
\newcommand{\dx}{\partial_x}
\newcommand{\dxx}{\partial_{xx}}
\newcommand{\dxxx}{\partial_{xxx}}
\newcommand{\dz}{\partial_z}

\newcommand{\fdiff}[2]{\dfrac{\delta #1}{\delta #2}}
\usepackage{mathtools}
\def\para{\vspace{1.5 mm}}

\newtheorem{thm}{Theorem}[section]
\newtheorem{lem}[thm]{Lemma}
\newtheorem{cor}[thm]{Corollary}
\newtheorem{prop}[thm]{Proposition}

\newtheorem{rem}[thm]{Remark}

\newtheorem{conjecture}{Conjecture}[section]

\title{From Spectral Curves to Variational Equations: Geometry and Differential Galois Theory of KdV Cnoidal Waves}

\author{Juan J. Morales-Ruiz, Jean Pierre Ramis, Maria-Angeles Zurro}

\date{}

\begin{document}

\maketitle
\begin{abstract}
   We provide a spectral-geometric description of the KdV variational equation around cnoidal waves in terms of two interacting spectral curves. The second symmetric power of the Lamé–Schr\"odinger operator links squared eigenfunctions, KdV conserved densities and variational solutions, while the spectral curve of the third-order variational operator yields explicit closed-form solutions. The branch points of the spectral curve appear as distinguished loci where these constructions become unified, revealing new connections between spectral geometry and differential Galois theory.
\end{abstract}
\tableofcontents

\section{Introduction}\label{sec-intro}

The interplay between integrable nonlinear equations, algebraic geometry and spectral theory has been one of the central themes in mathematical physics since the seminal works of Burchnall and Chaundy, Novikov, Dubrovin, Krichever and their collaborators. The discovery that commuting ordinary differential operators are encoded by algebraic curves led to the modern algebro-geometric approach to integrable systems, where finite-gap solutions of nonlinear evolution equations are described by spectral curves, Baker–Akhiezer functions and their associated geometric structures.

Among the most prominent examples of this paradigm stands the Korteweg–de Vries (KdV) equation, whose finite-gap solutions are naturally associated with algebro-geometric Schrödinger operators and their spectral curves. In this framework, periodic and quasi-periodic waves arise from algebro-geometric data, and the spectral analysis of the corresponding Schrödinger operator plays a fundamental role in the dynamics of the nonlinear equation. The simplest finite-gap situation is represented by the cnoidal waves, which are associated with Lamé-type Schrödinger operators and elliptic spectral curves.

Parallel to these developments, Differential Galois Theory has provided a powerful algebraic approach to the study of linear differential equations. In particular, Picard–Vessiot theory has proven to be a natural setting for understanding the algebraic structure of integrable systems and their associated linear problems. More recently, the notion of spectral Picard–Vessiot extension has been introduced for algebro-geometric Schrödinger operators, revealing a direct connection between the geometry of the spectral curve and the differential-Galois structure of the corresponding eigenvalue problem. From this viewpoint, the spectral curve not only encodes commuting operators but also governs the construction of eigenfunction bundles and their associated differential field extensions.

Despite the extensive literature devoted to finite-gap Schrödinger operators and KdV solutions, the spectral-geometric structure of the variational equations around finite-gap backgrounds remains comparatively less understood. Variational equations play a central role in many aspects of dynamical systems, including stability theory, perturbation analysis and differential-Galois approaches to integrability. For integrable partial differential equations, they also provide a natural linearization of the dynamics around distinguished solutions and are closely related to the structure of conservation laws and squared eigenfunction expansions.

For the KdV equation, the importance of squared eigenfunctions was already recognized in the classical works of Gel'fand, Dikii, Gardner, Greene, Kruskal, Miura and many others. These functions constitute a fundamental tool in the inverse scattering formalism and in the construction of conserved quantities. On the other hand, the second symmetric power of the Schrödinger operator has long been known to occupy a privileged position in the theory of Lamé equations and finite-gap operators, beginning with the works of Hermite and Halphen. Nevertheless, the geometric mechanisms through which the second symmetric power, the variational equation and the spectral curves interact remain far from completely understood.

The main purpose of this paper is to exhibit the variational equation around a cnoidal solution of the KdV equation as a genuinely spectral-geometric object. Our starting point is the Lamé–Schrödinger operator naturally associated with the cnoidal wave. The corresponding spectral curve determines a spectral Picard–Vessiot structure whose eigenfunctions are given by the classical Hermite–Halphen solutions. We show that the second symmetric power of this operator provides a natural bridge between squared eigenfunctions, conserved densities of the KdV hierarchy and solutions of the adjoint variational equation supported on spectral curves.

A central observation of the present work is that the variational dynamics is governed not only by the classical spectral curve of the Schrödinger operator, but also by a second spectral object associated with the third-order operator arising in the variational equation itself. Using recent results on the factorization of algebro-geometric third-order operators, we construct the corresponding spatial spectral curve and obtain explicit solutions of the variational equation parametrized by points of this curve. Accordingly, the variational problem admits two complementary geometric descriptions: one inherited from the Schrödinger spectral curve through the second symmetric power construction, and another arising intrinsically from the centralizer of the third-order variational operator.

The interaction between these two spectral geometries constitutes one of the main themes of this work. In particular, we show that three apparently different constructions — Hermite–Halphen eigenfunctions, squared-eigenfunction expansions and factorization over the spectral curve of the third-order operator — lead to compatible families of solutions. This compatibility becomes especially transparent at the branch points of the Schrödinger spectral curve, where all constructions degenerate in a highly structured manner.

From a geometric viewpoint, these branch points play a distinguished role. They correspond to the Lamé solutions of the Schrödinger operator and generate static solutions of the variational equation. Moreover, the degeneration phenomena observed at these points suggest a deeper relationship between ramification loci on spectral curves and the differential-Galois structure of the associated linear problems. This observation motivates a conjectural picture in which branch points arise naturally as loci where the corresponding differential-Galois groups undergo a geometric degeneration.

The results obtained in this paper therefore connect several classical and modern themes: finite-gap integration, spectral curves, commuting differential operators, Picard–Vessiot theory, variational equations and conservation laws of the KdV hierarchy. Beyond the particular case of cnoidal waves, the methods developed here provide evidence that variational equations associated with finite-gap solutions should themselves admit a rich spectral-geometric interpretation.

\medskip

\subsection*{Main Results}

The main goal of this work is to show that the variational equation around a KdV cnoidal wave admits a natural spectral-geometric description governed by two interacting spectral curves. The first one is the classical spectral curve associated with the Lamé--Schrödinger operator defining the finite-gap background, while the second one arises from the spectral theory of the third-order operator governing the variational dynamics itself.

Our first result establishes that these two spectral structures are linked through the second symmetric power construction.

\medskip

\noindent{\bf Theorem A.} [Spectral-geometric description of the variational equation]

{\it 
Let \( 
u_0(z)=2\wp(z;g_2,g_3)-\frac{c}{6} \) be a cnoidal stationary solution of the KdV equation and let
\[
L=-\partial_z^2+u_0
\]
be the associated Lamé--Schrödinger operator with spectral curve \( 
\Gamma_0 \). Let \(  M=-\partial_z^3+(6u_0+c)\partial_z+6u_{0,z} \) be the third-order operator defining the variational equation
\[
\xi_t=-M\xi .
\]
Then:

\begin{enumerate}
\item The second symmetric power of $L-E$ coincides with the spatial part of the adjoint variational equation and transforms pairs of eigenfunctions of the Schrödinger problem into solutions of the adjoint variational equation.

\item The variational equation admits a spectral representation over the spectral curve ${\Gamma_1}$ associated with the centralizer of $M$.

\item The solutions obtained from the Lamé spectral curve $\Gamma_0$ through squared eigenfunctions and those obtained from the spectral curve ${\Gamma_1 }$ through factorization of $M-\nu$ define compatible families of variational solutions.
\end{enumerate}
}

Consequently, the variational dynamics of the cnoidal wave is governed simultaneously by the geometry of the spectral curves $\Gamma_0$ and ${\Gamma_1 }$.

\medskip

The second result identifies a distinguished geometric role for the ramification locus of the Lam\'e spectral curve $\Gamma_0$.

\medskip

\noindent{\bf Theorem B.} [Ramification locus and unification of spectral constructions]
{\it 

Let \( \Gamma_0 \) be the spectral curve of the Lamé--Schrödinger operator, and let
\( Z=\{B_1,B_2,B_3\} \) denote its set of branch points. Then:
\begin{enumerate}
\item At the points of $Z$, the Hermite--Halphen eigenfunctions degenerate into the classical Lam\'e solutions.

\item The squared-eigenfunction construction produces static solutions of the adjoint variational equation, whose derivatives generate distinguished static solutions of the variational equation.

\item The spectral factorization of the third-order variational operator leads to the same family of solutions.

\end{enumerate}
}

Therefore, the branch points of $\Gamma_0$ constitute the locus where the Hermite--Halphen construction, the squared-eigenfunction formalism, and the spectral factorization of the variational operator become unified. In particular, the ramification locus acquires a distinguished geometric role in the spectral description of variational dynamics.

\medskip

Taken together, these results show that the variational equation around a KdV cnoidal wave is not merely a linearization of the nonlinear dynamics, but rather a spectral-geometric object controlled by the interaction of two spectral curves whose constructions become unified at the ramification locus.

This interaction between the Lamé spectral curve and the spectral curve of the variational operator is the central geometric theme of the present work.

\bigskip

{\it The paper is organized as follows.} Section \ref{sec-preliminaries} reviews the spectral Picard–Vessiot description of the Lam\'e–Schr\"odinger operator associated with the cnoidal wave and recalls the structure of its spectral curve $\Gamma_0$. Section \ref{sec-preliminaries} contains the proof of Theorem B.1. Section \ref{sec-sym-power} studies the second symmetric power of the Schr\"odinger operator and establishes its relationship with squared eigenfunctions, conserved densities, and the KdV hierarchy. The asymptotic expansion used is slightly different from the classical one. Section \ref{sec-sym-power} contains the proof of Theorem A.1, Theorem B.2. Section \ref{sec-VE} is devoted to the variational equation around the cnoidal wave. We derive explicit solutions using both the Schr\"odinger spectral curve $\Gamma_0$ and the spectral curve $\Gamma_1$ associated with the corresponding third-order operator, and we analyze the special role played by the branch points. Section \ref{sec-VE} contains the proof of Theorem A.2,  A.3, and Theorem B.3. Several technical results concerning KdV differential operators, Lam\'e solutions, and third-order algebro-geometric operators are collected in the appendices.

\section{Preliminaries}\label{sec-preliminaries}

The KdV hierarchy provides a whole family of differential equations in partial derivatives. The first of these is called the Korteweg-de Vries equation. It was discovered in 1895 by Korteweg and de Vries, and its form is as follows:
{\begin{equation}\label{eq:KdV}
    u_t +6 u u_x +u_{xxx} =0
\end{equation}
}

Next, we present the general solution of \eqref{eq:KdV} in the travelling wave regime. We follow  the presentation given by  N. A. Kudryashov in \cite{kudryashov2009}.

In variables $(z,t)=(x-wt,t)$, we look for solutions $u(x,t)=y(z)$. Then integrating \eqref{eq:KdV} with respect to $z$, this equation  is the nonlinear ordinary differential equation
\begin{equation}\label{eq:d-TW}
    y_{zz}+3y^2-wy+C_1 =0
\end{equation}
with $C_1$ an integration constant. Multiplying equation \eqref{eq:d-TW} by $y_z$ and integrating with respect to $z$, we obtain:
\begin{equation}\label{eq:e-TW}
    (y_z )^2 +2y^3-wy^2+2C_1 y+2C_4 =0 ,
\end{equation}
where $C_4$ is a second integration constant. The polynomial $F(y)= y^3 -\frac{w}{2}y^2+C_1 y +C_4 $  allows us to write the differential equation in terms of its roots $\alpha$, $\beta$, $\gamma$. Specifically, $(y_z )^2 =-2 ( y-\alpha)(y-\beta)(y-\gamma) $. The real case can be consider, and in this setting is usually assumed $\alpha>\beta>\gamma$.

The general solution of the equation \eqref{eq:e-TW}  can be defined in terms of the Jacobi elliptic function $\cn (u,k)$, the elliptic cosine function, \cite{kudryashov2009, drazin1989},
\begin{equation*}
    y(z)= \beta +(\alpha-\beta)\cn^2 \left(
    u , k
    \right)\quad , \quad u= \sqrt{\frac{\alpha-\gamma}{2}}z \ , \ k^2 =\frac{\alpha-\beta}{\alpha-\gamma}
\end{equation*}
and $y(z)$ is called a {\it cnoidal wave} since it is given by the $\cn $ function. Pro\-perties of this special function, see \cite{chandrasekharan2012elliptic}, allows to rewrite $y(z)$in terms of the Weierstrass $\wp (z)$. In fact, defining 
\begin{equation*}
    \widetilde{y}(z):= y(- \imath\sqrt{2} z )-\frac{w}{6} ,
\end{equation*}
we obtain the differential equation
\begin{equation}\label{eq-WP}
    \left(
    \widetilde{y}'
    \right)^2 =
    4\widetilde{y}^3- \widetilde{g_2 } \widetilde{y} - \widetilde{ g_3 } \ ,
\end{equation}
where 
\begin{equation*}
  \widetilde{g_2 } = \frac{1}{3}w^2 -4C_1 \quad , \quad   
  \widetilde{ g_3 } = \frac{w^3 }{27} - \frac{2w}{3}C_1 -4C_4 \ .
\end{equation*}
Consequently, we obtain the solution of \eqref{eq-WP} 
\[
 \widetilde{y}(z) = \wp(z; \widetilde{g_2 } ,  \widetilde{ g_3 } ).
\]
Let $e_1 $, $e_2 $, $e_3 $ be the roots of the polynomial $G(\widetilde{y}) =4\widetilde{y}^3-\widetilde{g_2 } \widetilde{y} - \widetilde{ g_3 }$. Then
\begin{equation*}
    e_i = \int_{\wp(\omega_i )}^\infty  \frac{dt}{\sqrt{4t^3 -\widetilde{g_2 } t -\widetilde{ g_3 } }} \ , \ \textrm{for } i=1, 2, 3.
\end{equation*}
In this setting $\omega_1$, $\omega_2$ are the periods of $\wp (z)$, and $\omega_3 =\omega_1 + \omega_2$. Thus,  we obtain
\begin{equation*}
    y(z)=\widetilde{y}\left(\frac{\imath}{\sqrt{2}}z 
    \right)+\frac{w}{6} = 
    \wp\left(\frac{\imath}{\sqrt{2}}z; \widetilde{g_2 }  , \widetilde{g_3 }  \right) +\frac{w}{6} 
    .\end{equation*}
Properties of the $\wp$ function, see \cite{pastras2020weierstrass}, p 13, allow to express the cnoidal wave as

\begin{equation*}
    y(z)=  
    2\wp\left(\imath z; \frac{\widetilde{g_2 }}{4}  , \frac{\widetilde{g_3 }}{8}  \right) +\frac{w}{6} 
    .
\end{equation*}
To shorten we  write $g_2= \frac{\widetilde{g_2 }}{4}$ and $ g_3 =\frac{\widetilde{g_3 }}{8}$. Thus, we obtain the rewriting of the traveling wave solution of  \eqref{eq:KdV}:

\begin{equation*}
     u(x,t)=y(z)= -2\wp\left( 
    z; {g_2 }  , -{g_3 }  \right) +\frac{w}{6} .
\end{equation*}
Let define $\widetilde{z}=x+c t$. It follows from the preceding discussion that the function 
\begin{equation}\label{def-cnoidal-potential}
    u_0 (\widetilde{z})=u(\imath x, \imath t) =-2\wp\left( 
    \imath \widetilde{z}; {g_2 }  , -{g_3 }  \right) -\frac{c}{6}=2\wp\left( 
      \widetilde{z} ; {g_2 }  , {g_3 }  \right) -\frac{c}{6} .
\end{equation}
satisfies the following $\KdV_1$ ordinary differential equation:
{\begin{equation}\label{eq:KdV-1}
     u_{\widetilde{z}\widetilde{z}\widetilde{z}}-6 u u_{\widetilde{z}} -cu_{\widetilde{z}} =0 .
\end{equation}
}Using the notation in the appendix \ref{sec-KdV-polinomials}, we can rewrite it as  $\KdV_1 (u_0 , c/4)=0$. Then, the differential operator 
\begin{equation*}
 \hat{P}_{3} =P_3+\frac{c}{4}P_1 = -\partial^3+\frac{3}{2}  u_0 \partial +\frac{3}{4} u_0' + \frac{c}{4}\partial = -\partial^3 + \left(  \frac{3}{2}  u_0 + \frac{c}{4}\right) +\frac{3}{4} u_0'.   
\end{equation*}
commutes with the Schr\"odinger operator $L=-\partial^2 + u_0$, since $[ 4\hat{P}_{3} , L]=4 \cdot \KdV_1 (u_0 , c/4)=0$.

\bigskip

\centerline{\rule{5cm}{0.4pt}}

\bigskip

Now we consider the following KdV equation where $\KdV_1$ is defined in Apendix \ref{sec-KdV-polinomials} formula \eqref{eq-KdV},
{\begin{equation}\label{eq:KdV-1}
    u_t = 4\cdot \KdV_1 (u , c/4) = u_{zzz}-6 u u_z -c u_z.
\end{equation}}
which is satisfied by the cnoidal wave
\begin{equation}\label{eq-cn-wave}
u_0(z)=2\wp(z)-\frac{c}{6}    .
\end{equation}
In the remainder of this work, the analyzed KdV equation will be the one specified in \eqref{eq:KdV-1}. 

Consider the following ordinary differential operators
\begin{equation}\label{eq:Lax-c-cnoidal}
   L=  -\partial^2 +u  \ , \     \widetilde{A}=4\cdot   \hat{P}_{3}=-4\partial^3 +(6u +c)\partial + 3 u_{z}.
\end{equation}
Then equation \eqref{eq:KdV-1} appears as their compatibility condition. Accordingly, this relationship can be written as the following Lax equation:
{\begin{equation}\label{eq:Lax-operatorEQ-cnoidal}
   L_t = [ \widetilde{A}  ,L].
\end{equation}}
In the search for  solutions to the spectral problem associated with the operator Schr\"odinger $L$, the induced time evolution is determined by the operator 
$\widetilde{A}$. Therefore, we have to study the system

{\begin{equation}\label{eq:t-spectral}
(P1)\qquad    L\phi =E\phi \quad , \quad \phi_t =\widetilde{A} \phi   .
\end{equation}}

Moreover, the cnoidal wave $u_0(z)=2\wp(z)-\frac{c}{6}   $ with $\wp(z)$  the Weierstras function $\wp (z; g_2 , g_3 )$ is associated to the elliptic curve $\cE$ defined by the polynomial $Y^2 = 4X^3-g_2 X -g_3$. We denote by $\omega_1$, $\omega_2$ , $\omega_3 := \omega_1 +\omega_2$ its half-periods. Then
$$
e_1 = \wp (\omega_1 ; g_2 , g_3 ) \, ,\, 
e_2 = \wp (\omega_3 ; g_2 , g_3 ) \, ,\, 
e_3 = \wp (\omega_2 ; g_2 , g_3 ) \, ,\, 
$$
are the roots of the polynomial $4X^3-g_2 X -g_3$, and the half-periods are the roots of $\wp'$ in a fundamental period paralelogram $\Omega$,
\begin{equation}\label{def-ceros-wpp}
    \wp' (\omega_i ; g_2 , g_3 ) = 0  \  , \ i = 1,  2 , 3, 
\end{equation}
see for instance \cite{pastras2020weierstrass}. Moreover, the Weierstrass elliptic function obeys the homogeneity relation 
\begin{equation}\label{elliptic_homogeneity}
 \wp (z; g_2 , g_3 )= a^2 \wp (a \cdot z ; \frac{g_2 }{a^4 } , \frac{g_3 }{a^6 } )
\quad , \quad 
\wp' (z;g_2 , g_3 )= a^3 \wp' (a \cdot z ; \frac{g_2 }{a^4 } , \frac{g_3 }{a^6 } ).   
\end{equation} 
Then, in particular, for $a=\imath$ this gives $ \wp (z; g_2 , g_3 )= - \wp (\imath  \cdot z ; {g_2 } , -{g_3 } )$ and $ \wp' (z; g_2 , g_3 )= - \imath \cdot \wp (\imath  \cdot z ; {g_2 } , -{g_3 } )$.

The elliptic function $u_0$   in \eqref{eq-cn-wave} satisfies   the ordinary differential equation \eqref{eq:KdV-1} in the stationary case.  For the stationary solutions of $\KdV_1$,  the eigenvalues problem \eqref{eq:t-spectral} can be  algebraically solved. In fact, for a solution in  separate variables 
{\begin{equation}
\phi= e^{\mu t} \varphi (z) ,    
\end{equation}}
we obtain the following coupled eigenvalue problem  
{\begin{equation}\label{eq:s-KdV1}
(P2)\qquad     L\varphi =E\varphi \quad , \quad \widetilde{A} \varphi = \mu \varphi \quad \textrm{with } [L, \widetilde{A}]= 0 ,
\end{equation}}
since $u_0$ satisfies \eqref{eq:KdV-1}.
Moreover, it can be  algebraically solved, \cite{MRZ2}, to provide a solution over a curve $\Gamma$, the spectral curve associated to  \eqref{eq:s-KdV1}. In section \ref{sec-B} we summarize several relevant results concerning this outcome for ease of reference.

\subsection{Spectral Picard-Vessiot  approach to the Sch\"odinger operator}\label{sec-B}

The stationary cnoidal wave introduced in the previous section gives rise to a Lam\'e–Schr\"odinger operator whose spectral theory admits a natural algebro–geo\-metric description. Our aim in this section is to revisit this classical picture from the perspective of spectral Picard–Vessiot theory, following the framework developed in \cite{MRZ2}. This point of view will be essential in later sections, where the same geometric structures reappear in the study of the variational equation.

For the stationary solution $u_o (z)= 2\wp(z)-\frac{c}{6}$, the Lax pair \eqref{eq:Lax-c-cnoidal} can be rewritten in matrix  form, see  \cite{GH}, 
\begin{equation} \label{eq:cn1LAXpair}
\bm\psi_z  =  U_0 {\bm\psi} \quad  , \quad \mu\bm\psi   =  W_0  \bm\psi \,
\end{equation}
where \begin{equation} 
 \begin{array}{ll}
 U_0 & =  \begin{pmatrix}0&1\\ 2\wp(z)-\frac c6-E&0\end{pmatrix}   , \\
 W_0  & =  \begin{pmatrix}-2\wp'(z)&4\wp(z)+\frac {2c}3 +4E\\
4\wp(z)-4\wp^2(z)-4E^2-\frac 43cE-\frac{c^2}9-g_2&2\wp'(z)\end{pmatrix}  .
\end{array}  
\end{equation}
The compatibility condition between these equations is equivalent to the stationary KdV equation satisfied by $u_0$. Consequently, the pair$(U_0 ,W_0)$ defines an algebro–geometric spectral problem.

The corresponding spectral curve is obtained from the characteristic polynomial of $W_0$,
  \cite{GH},
\begin{equation}\label{eq-Gamma}
  f( E,\mu )= \det (W_0 -\mu)= \mu^2 -P(E)  
\end{equation}
whith \(P(E)=-16E^3-8cE^2+(12u_0 ^2+4cu_0-4u_{0,zz}-c^2)E+4u_0^3 + 4cu_0^2 +  c^2 u_0 - 2u_0 u_{0,zz} + u_{0,z}^2- cu_{0,zz} \).
This yields the affine algebraic curve
\begin{equation}
\Gamma \ : \  {\mu}^{2}+16\,{E}^{3}+8\,c{E}^{2}+ \left( \frac43\,{c}^{2}-4\,{ g_2 } \right) E+4\,
{ g_3 }+{\frac {2\,{c}^{3}}{27}}-\frac23 \,c{ g_2 }  ,
\end{equation}
The coefficients of this polynomial are first integrals of the stationary KdV flow and encode the spectral geometry of the cnoidal wave.

It is convenient to shift the spectral parameter by introducing
\[
\Tilde{E} :=E+\dfrac c6 .
\]
With this normalization the Schr\"odinger operator takes the standard Lam\'e form
\begin{equation}\label{ecu-Lame}
   L-E=   -\partial^2 +2\wp(z) - \widetilde{E}  
\end{equation}
and, abusing notation, the spectral curve becomes
\begin{equation}\label{eq:cnsp}
\widetilde{\Gamma}: \ \mu^2=-16\Tilde{E}^3+4g_2 \widetilde{E}-4g_3 . 
\end{equation}
This is precisely the elliptic curve naturally associated with the cnoidal wave.

We now reinterpret the classical Hermite–Halphen construction from the
viewpoint of spectral Picard–Vessiot theory. The spectral curve
$\widetilde{\Gamma}$ not only parametrizes the eigenvalues of the
Lam\'e–Schr\"odinger operator, but also provides a natural geometric setting
for describing its factorization and its families of eigenfunctions. In the language of spectral Picard–Vessiot theory, this family defines an eigenvector bundle over the curve.

The first step consists in identifying the ramification structure of $\widetilde{\Gamma}$. The branch points are
\[
B_i=(\widetilde{ e_i },0)=(-e_i,0),
\qquad i=1,2,3,
\]
where $e_1, e_2 , e_3$ are the roots of
\[
4X^3 - g_2 X - g_3.
\]
Let \(Z=\{B_1,B_2,B_3\}\) denote the ramification locus.
Away from \(Z\), the spectral curve is uniformized by the elliptic
parameter \(s\) through 
\begin{equation*}
  \Tilde{E}(s):= - \wp (s; g_2 , - g_3 ) \quad , \quad   \mu(s):= 2 \wp' (s; g_2 , -g_3 ).
\end{equation*}
which uniformizes $\widetilde{\Gamma}$ and allows the spectral parameter to be replaced by $s$ on the elliptic curve.

An important geometric feature of the spectral curve
\(\widetilde{\Gamma}\) is the hyperelliptic involution
\[
\iota:\widetilde{\Gamma}\longrightarrow\widetilde{\Gamma},
\qquad
(\widetilde E,\mu)\longmapsto(\widetilde E,-\mu),
\]
which exchanges the two sheets of the double covering
\(\widetilde{\Gamma}\to\mathbb C\),
\((\widetilde E,\mu)\mapsto \widetilde E\).
In terms of the uniformizing parameter \(s\), this involution is simply

\[
s\longmapsto -s,
\]
since the Weierstrass function is even whereas its derivative is odd:

\[
\widetilde E(-s)=\widetilde E(s),
\qquad
\mu(-s)=-\mu(s).
\]
The branch points \(B_i=(\widetilde e_i,0)\) are precisely the fixed
points of \(\iota\). Geometrically, they correspond to the points where
the two sheets of the spectral curve coalesce and where the spectral
parameter ceases to distinguish the two local eigenfunctions.

For every point
\(\cN (s)=(\widetilde E(s),\mu(s))\in\Gamma\setminus Z\),
the Schr\"odinger operator $L-E$ admits a factorization over the spectral curve, 
\[
  L-E =(-\partial -\Tilde{\sigma} )(\partial -\Tilde{\sigma}) 
\]
with
\begin{equation}\label{eq1-factor-L-cnoidal}
\Tilde{\sigma} (z,s) =\frac {\mu(s) -   2\wp'(z)  }{4\wp(z)
-\frac13 c+4\,  \Tilde{E}(s) } 
=\frac {-\imath \cdot  \wp'(\imath \cdot s )  -   \wp'(z)  }{2\wp(z)
 + 2\,  \wp(\imath \cdot s ) -\frac16 c} .
\  , 
\end{equation}
This factorization immediately produces two solutions of the Lamé
equation. Indeed, if
\[
(\partial-\widetilde\sigma_i)\varphi_i=0,
\qquad
\widetilde\sigma_i=
\widetilde\sigma\bigl(z,(-1)^is\bigr),
\tag{31}
\]
then \(\varphi_1\) and \(\varphi_2\) are linearly independent solutions of
\(L-E\). Their Wronskian satisfies
\[
\frac{\operatorname{wr}(\varphi_1,\varphi_2)}
{\varphi_1\varphi_2}
=
\widetilde\sigma_1-\widetilde\sigma_2
=
\frac{\mu(s)}
{2\wp(z)-\frac{c}{6}+2\widetilde E(s)}
=
\frac{-i\,\wp'(is)}
{\wp(z)-\frac{c}{12}+\wp(is)},
\]
which is nonzero for every point of
\(\widetilde{\Gamma}\setminus Z\).
Hence \(\varphi_1\) and \(\varphi_2\) form a basis of solutions of the Lamé
equation away from the ramification locus. They are the classical Hermite–Halphen functions.

The branch points are closely related to the half-periods of the
underlying elliptic curve. Since the zeros of \(\wp'(z)\) occur at the
half-periods \(\omega_i\), and
\(\widetilde e_i=-e_i\), the corresponding points of
\({\Gamma}\) can be identified with
\(\widetilde\omega_i=-i\omega_{\varepsilon(i)}\) for a suitable
permutation \(\varepsilon\) of \(\{1,2,3\}\).

Consequently, if \(\Omega\) denotes a fundamental period parallelogram,
a basis of solutions of
\[
L-E
=
-\partial_z^2+2\wp(z)-\wp(is)
\]
is given by the classical Hermite–Halphen functions
\[
\varphi_i(z,s)
=
\frac{\sigma\!\left(z+(-1)^{i+1}s\right)}
{\sigma(s)\sigma(z)}
\,e^{(-1)^iz\zeta(s)},
\qquad
P_i=\mathcal N((-1)^is)\in
\widetilde\Gamma\setminus Z,
\tag{32}
\]
where \(\sigma\) and \(\zeta\) denote the Weierstrass sigma and zeta
functions, respectively. These functions constitute the geometric
eigenfunctions naturally associated with the spectral curve and provide
the local sections of the spectral bundle that will play a central role
throughout the remainder of the paper.

The involution \(\iota\) has a direct interpretation at the level of
eigenfunctions. The two factorizations associated with the points
\(P=\mathcal N(s)\) and \(\iota(P)=\mathcal N(-s)\) give rise to the two
Hermite–Halphen solutions of the Lamé equation. Thus, the pair
\((\varphi_1,\varphi_2)\) may be viewed as the realization, at the level of
solutions, of the two sheets of the spectral covering. Away from the
ramification locus these solutions are distinct and linearly
independent, whereas at the branch points they merge into the classical
Lam\'e solutions.

The pair of Hermite–Halphen solutions provides more than a basis of the
scalar Lamé equation. Indeed, they assemble naturally into a fundamental
matrix of the first-order system \eqref{eq:cn1LAXpair}. For each point
\(P=\mathcal N(s)\in\widetilde\Gamma\setminus Z\), define

\[
\Psi(z,s)
=
\begin{pmatrix}
\varphi_1(z,s) & \varphi_2(z,s)\\
\varphi_1'(z,s) & \varphi_2'(z,s)
\end{pmatrix}.
\tag{34}
\]

Since the Wronskian of \(\varphi_1\) and \(\varphi_2\) does not vanish on
\(\widetilde\Gamma\setminus Z\), the matrix \(\Psi(z,s)\) is invertible
and therefore constitutes a fundamental matrix of the Lax system \eqref{eq:cn1LAXpair}.
In this way, the family of matrices \(\Psi(z,s)\) may be viewed as a
matrix realization of the spectral bundle over the curve
\(\widetilde\Gamma \setminus Z\).

The involution of the spectral curve,
\[
(\widetilde E,\mu)\longmapsto (\widetilde E,-\mu),
\]
interchanges the two Hermite–Halphen solutions and hence exchanges the
columns of \(\Psi\). From this perspective, the fundamental matrix
inherits directly the geometry of the spectral curve.

Returning to the original KdV Lax system \eqref{eq:t-spectral}, the temporal evolution is
obtained by attaching the exponential factors determined by the spectral
parameter \(\mu\). Consequently, the corresponding fundamental matrix is

\[
\Phi(x,t,s)
=
\Psi(x-ct,s)
\begin{pmatrix}
e^{\mu(s)t} & 0\\
0 & e^{-\mu(s)t}
\end{pmatrix}.
\tag{35}
\]
Equivalently, using the explicit Hermite–Halphen expressions, one
obtains the closed-form representation.

From the Picard–Vessiot viewpoint, the fundamental matrix
\(\Psi(z,s)\) plays a distinguished role. For each point
\(P=\mathcal N(s)\in\widetilde{\Gamma}\setminus Z\),
its entries belong to the Liouvillian extension generated by the
Hermite–Halphen eigenfunctions, and therefore \(\Psi\) determines the
corresponding Picard–Vessiot extension of the Lamé equation.
Rather than considering a single differential field extension for a
fixed spectral value, spectral Picard–Vessiot theory organizes these
extensions into a family parametrized by the points of the spectral
curve. In this sense, the curve \(\widetilde{\Gamma}\) acts as a moduli
space for the Picard–Vessiot extensions of the Schrödinger operator,
while the family of fundamental matrices \(\Psi(z,s)\) provides a
concrete realization of the associated eigenvector bundle. The passage
from individual solutions to the geometry of the spectral curve thus
transforms the classical spectral problem into a geometric object whose
fibres are Picard–Vessiot solution spaces.

Consequently, the collection of fundamental matrices
\(\{\Psi(z,s)\}_{P\in\widetilde{\Gamma}\setminus Z}\)
may be regarded as a matrix-valued section of the spectral bundle.
The spectral Picard–Vessiot extension is obtained by adjoining the
entries of these matrices to the differential field
\(K(\widetilde{\Gamma})\), thereby producing a differential field whose
field of constants is precisely the function field
\(C(\widetilde{\Gamma})\). From this perspective, the spectral curve is
not merely the Burchnall–Chaundy curve of the commuting operators; it is
the geometric parameter space on which the Picard–Vessiot theory of the
Schrödinger operator naturally unfolds.

\section{Second Symmetric Power and Integrability of Schrödinger Ope\-ra\-tors}\label{sec-sym-power}

The relevance of the second symmetric power of linear second order differential equations is well-known  long time ago.  This tool  was used  with success in the seventies and eighties of the nineteenth century by Hermite  and Halphen to studding the Lam\'e equation \cite{HER,HALP} (see also \cite{WW, brezhnev2008does,smirnov2002elliptic})

\begin{rem}
The pair Lax \eqref{eq:Lax-c-cnoidal} will allow us to obtain  some solutions of the variational equations \eqref{eq:avesta} associated to the KdV equation \eqref{eq:KdV-1}.  They are the so-called ``squared eigenfunctions"\ of the Lax pair solutions, that is, the solutions of the linear ordinary differential operator called the second symmetric power. i. e. the linear operator satisfied by {the product of solutions $\psi:=\phi_i\phi_j$} of $L-E$. We follow the works \cite{mclaughlin1986construction,flaschka1980multiphase,mckean1975spectrum}, based on the seminal paper \cite{gardner1974korteweg}.    
\end{rem}

\bigskip

First we consider the second order formal Schr\"odinger operator on a differential field $(\K, \partial_z )$ containing $u$ with $\coC$ as field of constants,
\begin{equation}\label{eq-Schrodinger-E}
    L-E =  -\partial_{zz}+u -E ,
\end{equation}
where $E$ is taken as an algebraic indeterminate, that is, $\partial_z (E)=0$. We extend the differential ring $(\coC(u)[E], \partial_x )$  with two indeterminates $Y_0$, $Y_1$, and, abusing notation,  the extended derivation  
\begin{equation*}
    \partial_z (Y_0) = Y_1 \quad , \quad \partial_z (Y_1 )= (u-E)Y_0 .
\end{equation*}
Then, the second symmetric power of $L-E $, denoted by $ (L-E)^{\odot 2}$, is defined to be a nonzero element of the differential operator ring with coefficients in the differential ring $(\coC(u)[E], \partial_z )$ of minimal order such that $(L-E)^{\odot 2} (Y_0^2 ) =0 $. An algorithm is given to calculate the second symmetric power of an operator of order $2$, see Theorem 1 in \cite{bronstein1997symmetric}, see also \cite{singer1993galois} . This result applied to $L-E$ gives us the following equality
\begin{equation}\label{eq-secondSym}
(L-E)^{\odot 2}    =-\partial_{zzz} +4 (u-E)\partial_z +2u_z .
\end{equation}
Let $\mathcal{F}^\dag$  be  a differential closure (\cite{Kolchin1973}, p. 102) of the differential field $(\coC(u)(E), \partial_z )$, and $\phi_1 , \phi_2 \in \mathcal{F}^\dag$ two independent solutions of \eqref{eq-Schrodinger-E}. Then, by construction, $\psi_i := \phi_i^2$ is a solution of \eqref{eq-secondSym}. Thus, $\psi_{zzz}=4 (u-E)\psi_z +2u_z \psi  $.

\bigskip

For $u$ a  solution of \eqref{eq:KdV-1},   the time evolution of a solution  $\phi$ of the spectral problem \eqref{eq:t-spectral} is given by $ \phi_t=\widetilde{A}\phi =-4\phi_{zzz}+(6u+c)\phi_z+3u_z \phi$. Then we obtain that $\psi := \phi^2$ is a solution of the following linear differential system of PDE
\begin{equation}\label{eq:slax_KdV}
\left\{
\begin{array}{rr}
-\psi_{zzz}+4 (u-E)\psi_z +2u_z \psi  &=0 \, ,\\   
\psi_t-(2u+4E+c)\psi_z + 2u_z\psi &=0.
\end{array}
\right.
\end{equation}
Now, eliminating $E$ in the system  \eqref{eq:slax_KdV}, we obtain 
{\begin{equation}\label{ec-eliminarE}
\psi_t + \psi_{zzz} - (6u+c  )\psi_z =0 .    
\end{equation}
}

For ease of reference, we refer to equation \eqref{eq:slax_KdV} as {\it the Lax equation for the second symmetric power} \eqref{eq-secondSym}. The equation \eqref{ec-eliminarE} is the so called {\it adjoint variational equation},  equation  \eqref{eq:avesta} in Section \ref{sec-VE}. 
Therefore, in the previous notation, we obtain the following conclusion.

\begin{cor}\label{cor-sol-second-symm}
  Any closed form solution  $\varphi (z, P )$ of the spectral problem \eqref{eq:s-KdV1}, provides a closed form solution $ \psi = \phi^2 = e^{2\mu t} \varphi (z, P)^2 $ of the adjoint variational equation \eqref{ec-eliminarE}, with $P=(E, \mu )$ in the spectral curve $\Gamma$ defined in \eqref{eq-Gamma} and associated to \eqref{eq:s-KdV1}.
\end{cor}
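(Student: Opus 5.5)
The plan is a direct verification: take $\phi=e^{\mu t}\varphi(z,P)$ with $P=(E,\mu)\in\Gamma$ and $\varphi$ a common solution of (P2), so that $L\varphi=E\varphi$ and $\widetilde A\varphi=\mu\varphi$. Then show that $\psi=\phi^2$ satisfies both equations of the system \eqref{eq:slax_KdV}, and hence, after eliminating $E$, equation \eqref{ec-eliminarE}. The first step is to note that $\phi$ solves the time-dependent problem (P1). Since $u_0$ is stationary, we have $\phi_t=\mu\phi=e^{\mu t}\widetilde A\varphi=\widetilde A\phi$, and $L\phi=E\phi$ holds trivially because $L$ involves only $z$. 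This places us exactly in the setting preceding \eqref{eq:slax_KdV}.

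\textbf{First equation.} Here I would invoke the second symmetric power \eqref{eq-secondSym}. Because $\phi$ solves $(L-E)\phi=0$, the definition of $(L-E)^{\odot 2}$ gives $(L-E)^{\odot 2}(\phi^2)=0$. This is also easy to check by hand: write $\psi=\phi^2$, so $\psi_z=2\phi\phi_z$, $\psi_{zz}=2\phi_z^2+2(u-E)\phi^2$, and
$$\psi_{zzz}=4(u-E)\psi_z+2u_z\psi.$$

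\textbf{Second equation.} I would reduce $\widetilde A\phi$ using $\phi_{zz}=(u-E)\phi$. This gives $\phi_{zzz}=u_z\phi+(u-E)\phi_z$, and therefore
$$\phi_t=\widetilde A\phi=(2u+4E+c)\phi_z-u_z\phi.$$
Multiplying by $2\phi$ yields $\psi_t=(2u+4E+c)\psi_z-2u_z\psi$, which is the second line of \eqref{eq:slax_KdV}.

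\textbf{Elimination of $E$.} From the second equation, $4E\psi_z=\psi_t-(2u+c)\psi_z+2u_z\psi$. Substituting this into the first equation gives
$$-\psi_{zzz}+4u\psi_z-\psi_t+(2u+c)\psi_z-2u_z\psi+2u_z\psi=0,$$
that is, $\psi_t+\psi_{zzz}-(6u+c)\psi_z=0$, which is \eqref{ec-eliminarE}. Finally, $\psi=e^{2\mu t}\varphi^2$ is in closed form whenever $\varphi$ is: squaring and multiplying by an exponential preserve Liouvillian, or explicit Hermite–Halphen, expressions.

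The only delicate point is the reduction of $\widetilde A\phi$ modulo the Schr\"odinger equation. The coefficient of $\phi_z$ must come out as $2u+4E+c$ and the zero-order term as $-u_z\phi$, because the cancellation of the $u_z\psi$ terms in the elimination depends on exactly these values. I would double-check this step against the normalization $\widetilde A=4\hat P_3$.
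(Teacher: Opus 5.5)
Your proposal is correct and follows the same route as the paper. The paper gets the first line of \eqref{eq:slax_KdV} from the second symmetric power \eqref{eq-secondSym} and the second line from the time evolution $\phi_t=\widetilde A\phi$, then eliminates $E$ to reach \eqref{ec-eliminarE}, with the corollary stated as an immediate consequence. Your explicit reduction $\widetilde A\phi=(2u+4E+c)\phi_z-u_z\phi$ modulo $\phi_{zz}=(u-E)\phi$, and your elimination step, both check out; they supply the computations the paper leaves implicit.
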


System \eqref{eq:slax_KdV} yields an important differential consequence. Combining the second equation with twice the first gives: 
{
\begin{equation}\label{eq-generating-densities}
  \psi_t + \partial (-2\psi_{zz}-12E \psi +(6u -c)\psi)=0 \ .  
\end{equation}
}
We  call the resulting equation  the  {\it master equation of conserved densities} of  KdV.

\subsection{Formal Squared Eigenfunction Expansions and Conserved Densities in the KdV hierarchy}

Consider $u=u(z,t)$ a function in $1+1$ variables. In the $\Delta = \{ \partial=\partial_z , \partial_t \}$-field $K=\coC (z,t, u )$ with constants field $\coC$ of zero characteristic, we can consider a partial differential equation (PDE)
\begin{equation}\label{ec-on sol}
   \Sigma(z,t,u)=0 
\end{equation}
A {\it formal conservation law} of the PDE \eqref{ec-on sol} is an identity that holds whenever $u$ is a solution of \eqref{ec-on sol}, taking the form
\begin{equation}\label{eq-fluxes}
    \partial_t T + \partial_z X = 0,
\end{equation}
where $T$ and $X$ belong to the differential field ${K}$. Following  \cite{olver1993applications}, we call   $T$  a {\it conserved formal  density
 } and   $X$  its corresponding  {\it associated formal flux}.

\begin{rem} 
Two densities $\rho$ and $\widetilde{\rho}$ that differ by a total $z$-derivative,
$\widetilde{\rho} = \rho + \dz(\sigma)$, define the same conserved quantity. Conservation
laws are therefore considered equivalent up to total derivatives.
\end{rem}

\medskip
  
Next, we adopt the notation from the ring of differential polynomials given in Kolchin \cite{Kolchin1973}, p. 70. We now restrict our attention to the KdV hierarchy, whose elements lie in the (partial) differential polynomial ring  $R=\coC\{u\} $, that is,
 \begin{equation}\label{eq-sigma-ell}
   \Sigma_\ell(u):= u_t -\kdv_\ell  \quad , \quad  \Sigma_\ell(u)=0\ .   
 \end{equation}
In particular,
\begin{equation}\label{ecuacion-kdv1-MRZ}
  \Sigma_1 (u):= u_t -\kdv_1 = u_t - (-\frac{1}{4} u'''+\frac{3}{2} u u' ).
\end{equation}
Let $R[E]$ be the differential polynomial ring  in the algebraic variable $E$. Its constants ring is $\coC [E]$. For each $\ell \in \bbN$, $\ell\not=0$, we consider the $\Delta$ differential ideal generated by the differential polynomial $\kdv_\ell$. Let denoted by $I_\ell = [ \kdv_\ell ]$ and define the differential ring
 \begin{equation*}
     R_\ell = R/ I_\ell .
 \end{equation*}
The quotient map $\theta_\ell : R\rightarrow  R_\ell $ induces a ring homomorphism $R[E]\rightarrow  R_\ell [E]$, and it 
can be localized in the multiplicatively closed set of all powers of $E$, that is, $S= \{ E^{n} : n\in \bbN \}$. Then, we obtain the following homomorphism $\Theta_\ell : R[E]_E :=  S^{-1} (R[E]) \longrightarrow  S^{-1} ( R_\ell [E])$ given by 
\begin{equation}
   \Theta_\ell  \left( \frac{\sum_{i=0}^n p_i(u)E^{i}}{E^m }\right)= \frac{\sum_{i=0}^n \theta_\ell (p_i(u))E^{i}}{E^m }  \ .
\end{equation}

Recall that the equation $(\star )$ of the Lax's equation \eqref{eq:slax_KdV} for the second symmetric power of the operator $L-E =-\partial^2+u-E$ reads
 \begin{equation}\label{eq-master-densities}
-\psi_{zzz}+4 (u-E)\psi_z +2u_z \psi  =0 \quad .
\end{equation}

Next we proceed to solve \eqref{eq-master-densities} in the ring $S^{-1} ( R_\ell [E])$. To see that, first take 
\begin{equation}\label{P-ell}
Q_\ell = \sum_{n=0}^\ell \psi_n (2E)^{-(n+1)}    
\end{equation}
with $ \psi_n \in R$, and impose to satisfy \eqref{eq-master-densities}. Then, for the fixed $\ell >0$, we have to solve the differential recursion
\begin{equation} \label{recu-1}
 (\gamma_\ell )\left\{ \begin{array}{rl}
0&=-2 \psi_{0,z} \\
 \psi_{n+1,z}  &= \frac12 \left(-\psi_{n,zzz}+4 u\psi_{n,z} +2u_z \psi_n \right) , \  \textrm{for } n= 0, \dots , \ell -1 \\
0&=-\psi_{\ell,zzz}+4 u\psi_{\ell,z} +2u_z \psi_\ell 
\end{array}  
\right.
\end{equation}

In particular, defining $\psi_0 :=1$, we obtain $$\psi_{n+1,z} = 2\cR^* \psi_n$$ 
where $\cR^*$ is the formal adjoint  of the recursion operator $\cR$ of the KdV hierarchy, \eqref{eq-recursion}. In addition, the families $\{ \kdv_n \}$ and $\{ v_n \}$ of differential polynomials defined in Appendix \ref{sec-KdV-polinomials} satisfy:

\begin{equation}\label{eq-kdv-cd}
\kdv_0:=u',\,\,\, \kdv_n:=\cR(\kdv_{n-1}),\mbox{ for }n\geq 1.
\end{equation}
\begin{equation}\label{eq-fn-cd}
v_0:=1,\,\,\, v_n:=\cR^*(v_{n-1}),\mbox{ for }n\geq 1.
\end{equation}
\begin{equation}\label{eq-vkdv-cd}
2\partial(v_{n+1})=\kdv_n.
\end{equation}
Consequently $\psi_0=v_0$, and equations \eqref{recu-1} read 
\[
 \psi_{n+1}  =\partial^{-1} (2\cR \partial ) \psi_n = 2 \cR^* \psi_n = 2^{n+1} v_{n+1} .
 \quad , \quad 
\]
In particular, {$\psi_{\ell} =2^{\ell} v_{\ell}  $}, and then the differential restriction in \eqref{recu-1} reads
\[
 -\psi_{\ell,zzz}+4 u\psi_{\ell,z} +2u_z \psi_\ell = 4\cR \partial \psi_\ell = 2^{\ell+1}\cR (2\partial  v_{\ell})=  2^{\ell+1}\cR (\kdv_{\ell-1} )= 2^{\ell+1}\kdv_{\ell}.
\]
Now we obtain the following. The element of the ring $S^{-1} ( R_\ell [E])$
\begin{equation}
   \Theta_\ell  (Q_\ell ) =  \sum_{n=0}^\ell \theta_\ell (\psi_n ) (2E)^{-(n+1)} = \sum_{n=0}^\ell 
   2^{n}\theta_\ell ( v_{n} )  (2E)^{-(n+1)} = 
   \sum_{n=0}^\ell 
   2\theta_\ell ( v_{n} )  E^{-(n+1)}
\end{equation}
solves the differential recursions $(\gamma_\ell )$. Accordingly, 
\begin{equation}
\begin{array}{rl}
 \psi_0&=v_0=1,\\ 
\psi_1&=2 v_1 =u,\\ 
 \psi_2&=2^2 v_2 =-\dfrac12 u_{zz}+\dfrac32 u^2, \\ 
\psi_3&=2^3 v_3 = \frac52 u^3 -\frac52 u_{zz}u -\dfrac{ 5}{4}(u_z )^2  +\dfrac{ 1}{4} u_{zzzz},\\
\vdots
\end{array}
\end{equation}

We now proceed to show that the coefficients of $Q_\ell$ are a conserved formal  densities. We use equation \eqref{eq-P2n+1} in Appendix \ref{sec-algebro-L2} for that purpose to provide a differential consequence of the symmetric second power of the $L-E$ operator.

\medskip

First, observe that the quotient map $\theta_\ell : R\rightarrow  R_\ell $ induces a derivation in $R_\ell$. In fact, the morphism $\partial_{t_\ell } := \theta_\ell \circ\partial_t : R \rightarrow R_\ell$ satisfies the Leibniz rule. Moreover
\begin{equation}
    \partial_{t_\ell } (u) = u_t +I_\ell \ , \ \partial_{t_\ell }(\kdv_\ell ) =0\in R_\ell ,
\end{equation}
Then $\partial_{t_\ell }$ defines a  in the ring $R_\ell$, which we continue to denote by abusing the notation $\partial_{t_\ell }$. Then, the KdV equation \eqref{eq-sigma-ell} reads
\begin{equation}
    \partial_{t_\ell } (\Sigma_\ell )=0 \Longleftrightarrow u_t =0 \ \textrm{in } R_\ell.
\end{equation}
In other contexts,  this is formulated as
\begin{equation}
u_{t_\ell } = \kdv_\ell \ ,    
\end{equation}
and generalized to any KdV polynomial of level $\ell$, see Appendix \ref{sec-KdV-polinomials}.

We first consider the setting corresponding to the equation $u_{t_1 } = \kdv_1$. In the notations of  Appendix \ref{sec-KdV-polinomials}, the associated Lax equation is
{\begin{equation}\label{eq:Lax-kdv-l+1}
   L_{t_1 } = [ P_3  , L].
\end{equation}}
with 
\begin{equation}
P_{3}=\sum_{j=0}^1 \left(v_{1-j}\partial-\frac{1}{2}\partial(v_{1-j})\right)L^j.
\end{equation}

We employ the Lax representation associated with the preceding equation \eqref{eq:Lax-kdv-l+1}, which we derive by imposing that $\phi^2$ satisfies the time evolution equation, with $\phi$ such that $(-\partial^2 +u)\phi =E\phi$ for a $u$ a solution of $u_{t_1 } = \kdv_1$. Consequently, if $\psi=\phi^2$, its time evolution is governed by the following expression:
\begin{equation}
\psi_t =2\phi \phi_t = 2\phi P_{3}(\phi)= 2\phi\sum_{j=0}^1 \left(v_{1-j}\partial-\frac{1}{2}\partial(v_{1-j})\right)E^j \phi     
\end{equation}
Thus, we obtain
\begin{equation}\label{eq-Q-l-t}
   \psi_t = \sum_{j=0}^1 \left( 
   \partial(v_{1-j}\psi)-2\partial (v_{1-j})\psi 
   \right)
   E^j = 
   \partial(v_{1}\psi)-u'\psi 
    + 
   \partial(\psi)
   E.
\end{equation}
Now  take 
\begin{equation}\label{def-Q-ell} 
Q_\ell = \sum_{n=0}^\ell \psi_n (2E)^{-(n+1)}   = \frac12 \sum_{n=0}^\ell v_{n}  E^{-(n+1)} ,
\end{equation}
and  we obtain the following differential recursions forcing $Q_\ell$ verifying \eqref{eq-Q-l-t}.
\begin{equation}\label{recursion-densities-1}
    v_{n,t}= 
   \partial (v_{n+1} )+\frac12 \partial (uv_{n})-u' v_n \quad , \quad \textrm{for } n=0, \dots ,\ell .
\end{equation}
Consider the differential operator 
\begin{equation}\label{def-L}
 \cL(f):= \partial (\cR^* (f))+\frac12 \partial (uf) -u' f = (\partial \circ \cR^* + \frac12 \partial \cdot u\cdot  -u'\cdot )(f)   ,
\end{equation}
then the above equations equals
\[
\partial_t (v_n ) = \cL (v_n) \quad , \quad \textrm{for } n=0, \dots ,\ell . 
\]

The Lemma \ref{lem-v} implies that  $u' v_n$ is a total derivative, then $u' v_n =\D_x(q_n )$. Finally, we obtain equations \eqref{eq-fluxes} for the densities $T_n = v_n $ and the fluxes $X_n =-(v_{n+1} +\frac12  uv_{n}-q_n)$, since
\begin{equation*}
   v_{n,t}= 
   \partial \left( v_{n+1} +\frac12  uv_{n}-q_n \right) \quad , \quad \textrm{for } n=0, \dots ,\ell .   
\end{equation*}
From now on, we define $J_n := -2^{n} (v_{n+1} +\frac12  uv_{n}-q_n )$ and then the equations above are read as follows:
\begin{equation}\label{eq-time-v}
    \partial_t \psi_n + \partial J_n =0 \quad , \quad \textrm{for } n=0, \dots ,\ell . 
\end{equation}
In fact, for $u$ solving $u_t -\kdv_1 =0$, we have $Q_1 $ is a solution of $(\gamma_1 )$   and also $\partial_t \psi_1 + \partial J_1 =u_t -\kdv_1 =0$ with $J_1 = -2^{1}\cdot  \frac18 (u''-3u^2 )$. Consequently, equation \eqref{recursion-densities-1} is satisfied for $\ell=1$ in $R_1$, and also
\[
 v_{1,t_1}= 
   \partial \left( v_{2} +\frac12  uv_{1}-q_1 \right) \quad , \quad \textrm{with $t_1 = t $  in } R_1 .
\]
Moreover, equation \eqref{recursion-densities-1} provides the following evolution restrictions for $t=t_1$
\begin{equation}\label{eq-time-t1-v}
    \partial_{t_{1}} \psi_n + \partial J_n =0 \quad , \quad \textrm{for } n=2, \dots ,\ell , 
\end{equation}
for $\psi_n =\psi_n [u]$  on solutions $u$ of $u_{t_{1}} =\kdv_1$ in $R_1$.
\medskip

Assume now that we are concerned with the equation $u_{t_m } = \kdv_m$, the $m$-th element of the KdV hierarchy for $m>1$. Then the associated Lax equation is
{\begin{equation}\label{eq:Lax-kdv-l+1}
   L_{t_m } = [ P_{2m+1}  , L].
\end{equation}}
with 
\begin{equation}
P_{2m+1}=\sum_{j=0}^m \left(v_{m-j}\partial-\frac{1}{2}\partial(v_{m-j})\right)L^j 
\end{equation}
defined in Appendix \ref{sec-KdV-polinomials}. Now the time evolution is given by
\[
\psi_t =2\phi \phi_t = 2\phi P_{2m+1}(\phi)= 2\phi\sum_{j=0}^m \left(v_{m-j}\partial-\frac{1}{2}\partial(v_{m-j})\right)E^j \phi   =
\]
\[
\sum_{j=0}^m \left( 
   \partial(v_{m-j}\psi)-2\partial (v_{m-j})\psi 
   \right)
   E^j
\]
Consider $Q_\ell$ defined in \eqref{P-ell} for any positive integer  $\ell$, then $Q_m$ is the solution of $(\gamma_m )$ in $R_m$. Thus, forcing $Q_\ell$ in \eqref{def-Q-ell} to be a solution of the previous equation,  we obtain the $t=t_m$ evolution equating powers in $E$,
\[
 v_{n,t_m }= \sum_{\substack{(p,k)\in [0,m]\times [o,\ell ] \\ p+k=n}}
 \left(
 \partial (v_{m-p}v_k )-2\partial (v_{m-p})v_k 
 \right)
  \quad , \quad \textrm{for } n=0, \dots ,\ell .   
\]
on sololutions $u$ of $u_{t_m } = \kdv_m$. Consequently, for $\ell=m$ we get
\[
v_{n,t_m }=\sum_{p=0}^m  \left(
 \partial (v_{m-p}v_{n-p} )-\kdv_{m-p-1} v_{n-p} 
 \right)
  \quad , \quad \textrm{for } n=0, \dots ,m .   
\]
Appling Lemma \ref{lem-v}  in Appendix \ref{sec-KdV-polinomials}, we conclude that is $v_{n,t_m }$ a total derivative.
\begin{rem}
    In \cite{gel1979integrable}, equation \eqref{eq-master-densities} appears as equation (1.1), with no explicit connection established to the algebraic construction of the second symmetric power, as far as we are aware. Furthermore, adopting the notation of \cite{gel1979integrable}, it can be shown\footnote{The subscript $2j+1$ in $R_{2j+1}$ follows the convention of Gel'fand--Dikii~\cite{gel1979integrable} and refers to the order of the associated Lax operator $P_{2j+1}$, rather than indexing an independent sequence: $R_{2j+1}$ is thus the $j$-th coefficient in this generating series, labelled by the order of the differential operator it is naturally attached to, in parallel with our own use of $P_{2n+1}$ throughout the paper. In particular, it coincides (up to the sign and normalisation made explicit below) with the coefficient customarily denoted $R_j$ in the resolvent expansion $R_\lambda(z)\sim\sum_{\ell\ge 0}R_\ell[u]\,\lambda^{-\ell-1/2}$ of the diagonal Green's function of $L-\lambda$.} that
    \begin{equation*}
        v_j = (-1)^j 2 \overline{R_{2j+1}} \ .
    \end{equation*}
  where $\overline{R} = \sum_{j=1}^\infty \overline{R_{2j+1}} (-\imath )^{j}E^{-j/2}$ is a formal solution of the differential equation
   \begin{equation*}
       -2 y y''+(y'')^2+4(u-E)y^2=1 ,
   \end{equation*}
   obtained by formally integrating $\psi$
 times the equation \eqref{eq-master-densities} and setting the integration constant equal to 1.
\end{rem}

\subsubsection{On polynomial solutions of the second symmetric power}

In contrast to the constructions of Brezhnev~\cite{brezhnev2008does} and Smirnov~\cite{Smi94}, where finite-gap potentials and elliptic solutions of the KdV equation are determined explicitly by means of recursion relations on the $\Psi$-function and by ad hoc constructions, respectively, we propose here an alternative method grounded in differential algebra. Our approach recovers cnoidal (finite-gap) solutions without recourse to either the theta-function representation or the recursion formulae of~\cite{brezhnev2008does}, yielding instead a direct characterisation in terms of the conserved densities $v_j$. It thus provides a route complementary to that of Smirnov~\cite{Smi94}, one grounded instead in  differential algebra, and affords additional structural insight into the integrability of these potentials.

We now turn to establishing the conditions that guarantee the existence of a formal polynomial solution in $E$ to equation \eqref{eq-master-densities}; in fact, we shall obtain a complete characterisation of all such solutions in terms of the densities $v_j$ just introduced. To this end, we introduce the following notation.

For a polynomial $f(E)=\sum_{k=0}^\ell d_k E^k \in \coC[E]$ we define the sequence
\begin{equation}
    f_{[0]}(E) := f(E) \quad , \quad f_{[n]} := \frac{f_{[n-1]} (E)-f_{[n-1]} (0)}{E} \ , \ n=1, 2, \dots .
\end{equation}
Observe that $f_{[\ell]} (E)=d_\ell $\ and $ f_{[n]} (E) =0$ for $n>\ell$. From these observations, we obtain the following result (cf. \cite{gel1979integrable}, Prop. 1.1, where an analogous statement is attributed to Its and Matveev).

\begin{prop}
    Let $f$ be  a constant coefficients polynomial $f(E)=\sum_{k=0}^\ell d_k E^k $ in $\coC[E]$. Then
    \begin{equation}\label{def-Q-ell}
        Q_{\ell , f } (E) = f_{[0]}(E) v_0 +\cdots + f_{[\ell]}(E) v_\ell
    \end{equation}
    is a solution of \eqref{eq-master-densities} in $\coC\{u\}[E]$. Furthermore, any polynomial solution of \eqref{eq-master-densities} is of the form $Q_{\ell , f } $ for some $f  $ in $\coC[E]$.
\end{prop}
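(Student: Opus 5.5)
The plan is to compute directly how the operator in \eqref{eq-master-densities} acts on $Q_{\ell,f}$, using the recursion for the densities $v_n$. Write
\[
B(\psi):=-\psi_{zzz}+4u\psi_z+2u_z\psi ,
\]
so that \eqref{eq-master-densities} reads $B(\psi)-4E\,\partial\psi=0$. From the recursion $(\gamma_\ell)$, with $\psi_n=2^nv_n$, I expect the identity
\[
B(v_n)=4\,\partial(v_{n+1}),
\]
and by \eqref{eq-vkdv-cd} this also equals $2\,\kdv_n$. I would record this first, together with $\partial v_0=0$, and with the elementary identities for the truncations $f_{[n]}$:
\[
f_{[n]}(0)=d_n,\qquad E\,f_{[n+1]}(E)=f_{[n]}(E)-d_n,\qquad f_{[\ell]}=d_\ell .
\]

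\textbf{First assertion (direct verification).} The computation runs in three steps.
\begin{enumerate}
\item Since the $f_{[n]}$ are constants with respect to $\partial$,
\[
B(Q_{\ell,f})=4\sum_{n=0}^{\ell} f_{[n]}\,\partial v_{n+1}.
\]
\item Because the $n=0$ term vanishes, shifting the index gives
\[
4E\,\partial Q_{\ell,f}=4\sum_{n=1}^{\ell}E f_{[n]}\,\partial v_n=4\sum_{n=1}^{\ell}\bigl(f_{[n-1]}-d_{n-1}\bigr)\partial v_n .
\]
\item Subtracting, everything telescopes to
\[
B(Q_{\ell,f})-4E\,\partial Q_{\ell,f}=4\,\partial\Bigl(\sum_{n=0}^{\ell}d_nv_{n+1}\Bigr)=2\sum_{n=0}^{\ell}d_n\,\kdv_n .
\]
\end{enumerate}
Hence $Q_{\ell,f}$ is a solution exactly on the stationary higher KdV equation $\sum_n d_n\kdv_n=0$. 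I would therefore read ``solution in $\coC\{u\}[E]$'' in the same sense as $\Theta_\ell$ above: in the quotient by the differential ideal $[\sum_n d_n\kdv_n]$, i.e.\ for $u$ a stationary solution of that flow. This is consistent with the cnoidal case, where $u_0$ satisfies $\KdV_1(u_0,c/4)=0$. I would state this interpretation explicitly at the start of the proof.

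\textbf{Converse (descending recursion).} Let $Q=\sum_{k=0}^{\ell}a_kE^k$ with $a_k\in\coC\{u\}$ be a polynomial solution, and equate powers of $E$.
\begin{enumerate}
\item The coefficient of $E^{\ell+1}$ gives $\partial a_\ell=0$, so $a_\ell=d_\ell\in\coC$.
\item The coefficient of $E^{k}$ for $1\le k\le\ell$ gives $\partial a_{k-1}=\tfrac14B(a_k)$.
\item The coefficient of $E^0$ gives $B(a_0)=0$, which is the stationarity constraint found in the first part.
\end{enumerate}
I then prove by descending induction that
\[
a_k=\sum_{n\ge0}d_{n+k}\,v_n ,
\]
which is precisely the coefficient of $E^k$ in $Q_{\ell,f}$, for suitable constants $d_j$. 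Suppose $a_k=\sum_n d_{n+k}v_n$. Then
\[
\tfrac14B(a_k)=\partial\Bigl(\sum_n d_{n+k}v_{n+1}\Bigr),
\]
so $a_{k-1}-\sum_n d_{n+k}v_{n+1}$ lies in the kernel of $\partial$. Calling that constant $d_{k-1}$, and using $v_0=1$, gives the claimed form of $a_{k-1}$. The constants $d_0,\dots,d_\ell$ produced this way define $f=\sum_k d_kE^k$, and then $Q=Q_{\ell,f}$.

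\textbf{Main obstacle.} The delicate point is the kernel of $\partial$ in the differential polynomial ring, or in its relevant quotient. In $\coC\{u\}$ itself the kernel is $\coC$: a differential polynomial of positive order has a derivative that is linear in a new highest derivative of $u$, so its derivative cannot vanish. In the quotient by $[\sum_n d_n\kdv_n]$, however, extra first integrals appear; these are exactly the coefficients of the spectral curve noted in \eqref{eq-Gamma}. So the uniqueness step must be carried out in $\coC\{u\}$, before passing to the quotient, with only the final constraint $B(a_0)=0$ imposed modulo the ideal. I would check this carefully, following Gel'fand--Dikii \cite{gel1979integrable}, Prop.~1.1.
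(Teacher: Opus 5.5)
Your proposal is correct. For the converse it follows the paper's descending induction. For the first assertion it is more accurate than the paper's own argument.

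\textbf{First assertion.} The paper writes $Q_{\ell,f}=\sum_n\omega_nE^n$ with $\omega_n=\sum_k d_{n+k}v_k$ and asserts that $(\omega_n)$ satisfies all of \eqref{recursion-D2}. The relations for $1\le n\le\ell$ and $\partial\omega_\ell=0$ do hold. They follow from the operator identity $-\cD_2=4\,\partial\circ\cR^*$, which is also the clean justification of your $B(v_n)=4\partial v_{n+1}$; you should not merely "expect" it from $(\gamma_\ell)$.

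The $E^0$ relation $-\cD_2(\omega_0)=0$, however, is false in general. As your telescoping shows, $-\cD_2(\omega_0)=2\sum_n d_n{\kdv}_n$. For $d_\ell\neq0$ this is $2d_\ell$ times the extended polynomial ${\KdV}_\ell(u,c^\ell)$ of Appendix A, with $c_j=d_{\ell-j}/d_\ell$. It is nonzero in $\coC\{u\}$: already $f=E$ gives $Q_{1,f}=E+\frac12 u$ with residual $2{\kdv}_1$. So the first assertion holds only on stationary solutions of that equation, as you say, in line with the Its--Matveev/Gel'fand--Dikii statement the paper cites. The case the paper cares about satisfies it: for $u_0=2\wp-\frac c6$ one has ${\KdV}_1(u_0,c/4)=0$, and $f=E+\frac c4$ gives $Q_{1,f}=\wp(z)+\widetilde E$, which is Halphen's $R(z)$ in \eqref{ec-aux-Lame}.

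\textbf{Converse.} Your caveat about $\ker\partial$ is the right one, and you can sharpen it.
\begin{enumerate}
\item In $\coC\{u\}$ itself the induction ends with $\sum_n d_n{\kdv}_n=0$. Since ${\kdv}_n$ has order exactly $2n+1$, these are $\coC$-linearly independent, so the only polynomial solution there is $Q=0$.
\item In the quotient by $[\sum_n d_n{\kdv}_n]$ the integration constants may be nonconstant first integrals, so the converse as stated fails there.
\end{enumerate}
Your "hybrid" formulation must therefore avoid an ideal fixed in advance, because that ideal depends on $f$, and $f$ is only produced by the induction. State it this way: $Q\in\coC\{u\}[E]$ has vanishing coefficients of $E^1,\dots,E^{\ell+1}$ in the master equation if and only if $Q=Q_{\ell,f}$, and then the remaining $E^0$ coefficient equals $2\sum_n d_n{\kdv}_n$.

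Equivalently, fix $u$ in a differential field with constant field $\coC$, as for the cnoidal wave. There the paper's induction applies directly, and the polynomial solutions are exactly the $Q_{\ell,f}$ with $\sum_n d_n{\kdv}_n[u]=0$.
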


\begin{proof}
    Let $Q_\ell =\sum_{k=0} ^\ell \omega_k E^k$ be a polynomial solution of \eqref{eq-master-densities} in $\coC\{u\}[E]$ of degree $\ell >0$. Then equating degrees in $E$ we obtain the differential recursion
    \begin{equation}\label{recursion-D2}
        -\cD_2 (\omega_0 )=0 \ , \ -\cD_2 (\omega_n )=4 \partial (\omega_{n-1}) \ , \ 4 \partial (\omega_\ell )=0 \ .
    \end{equation}
    with $ \cD_2 = \dxxx - 4u\,\dx - 2u_x $. 

For $ Q_{\ell , f } (E) = f_{[0]}(E) v_0 +\cdots + f_{[\ell]}(E) v_\ell$ to be a solution we rewrite it as
\[
 Q_{\ell , f } (E) = \sum_{n=0}^{\ell} \left( \sum_{k=0}^{\ell -n} d_{n+k}  v_k \right) E^n  = \sum_{n=0}^{\ell}  \omega_n  E^n 
\]
with $\omega_n = \sum_{k=0}^{\ell -n} d_{n+k}  v_k$. Then the sequence $\omega_0 , \dots , \omega_\ell$ satisfies the differential recursion \eqref{recursion-D2}, which implies that $ Q_{\ell , f } $ is a solution of \eqref{eq-master-densities}.

Next we proceed by induction on $n$ to prove that we can construct a finite sequence $c_\ell , \dots c_n$ of constants such that $Q_\ell =\sum_{k=0} ^\ell \omega_k E^k=   Q_{\ell , f }$ for $f(E)=\sum_{n=0}^\ell c_n E^n$. In fact, since $\partial (\omega_\ell )=0$, we have $\omega_\ell =c_\ell$ for some constant $c_\ell \in \coC$. Consequently $4 \partial (\omega_{\ell -1}) = -\cD_2 (c _\ell  )=2u_z c_\ell$. Thus, $\omega_{\ell -1}= c_\ell v_1 +c_{\ell -1} v_0 $ for some constant $c_{\ell -1} \in \coC $. Suppose now that the formula
    \[
    \omega_{n+1} = \sum_{j=0}^{\ell-n-1}c_{n+1+j} v_j \ 
    \]
    holds, and let us prove it for $n$ by induction. We use recursion \eqref{recursion-D2} for this purpose.
    \[
    4 \partial (\omega_{n})= -\cD_2 (\omega_{n+1} )=4\partial \cR^* (\omega_{n+1})=4\partial 
    \left(
    \sum_{j=0}^{\ell-n-1} c_{n+1+j} v_{j+1}
    \right) .
    \]
 Therefore, $\omega_{n} = \sum_{k=1}^{\ell-n} c_{n+k} v_k + c_n v_0 $ for some constant $c_n \in \coC$. Consequently, we can write $Q_\ell $ as $Q_{\ell , f}$ with $f(E) = \sum_{k=0}^\ell c_k E^k$ because
   \[
   Q_{\ell } (E) = \sum_{n=0}^{\ell} \sum_{k=0}^{\ell -n} c_{n+k}  E^n  v_k= \sum_{k=0}^{\ell} 
   \left(
   \sum_{j=0}^{\ell -k} c_{k+j}E^j 
   \right)
   v_k = \sum_{k=0}^{\ell} f_{[k]} (E) 
     v_k \ ,
   \]
  and we achieve the statement.
\end{proof}

\noindent{\bf Conclusion. } Next, we compile the solutions of \eqref{eq-master-densities} obtained from the Lax pair \eqref{eq:slax_KdV}. In fact, for any $\ell >0$ and any polynomial $f\in \coC[E]$ of degree $m$, we denote by 
\begin{equation}
    \psi_{\ell, f}(E) := \sum_{n=0}^\ell 2v_n \frac{1}{E^{(n+1)}}+f_{[0]}(E) v_0 +\cdots + f_{[m]}(E) v_m .
\end{equation}
Then we have obtained the following solutions
\begin{enumerate}[label=(\alph*)]
    \item $ \psi_{\ell, f}(E)$ is a solution of \eqref{eq-master-densities} in $S^{-1} R_\ell [E]$.
    \item The time evolution of $\psi_{\ell, f} $ is provided by \eqref{eq-time-v}. Then, $ \psi_{\ell, f}(E)$ is a solution of \(\psi_t-(2u+4E+c)\psi_z + 2u_z\psi =0\).
\end{enumerate}
Therefore, $\psi_{\ell, f}(E)$ is a solution of the Lax pair \eqref{eq:slax_KdV} in $S^{-1} R_\ell [E]$ .

\subsubsection{Hierarchy of commuting flows}
Next we revisited the Hamiltonian approach to
the KdV hierarchy in connection with spatially rapidly decaying solutions. Consider 
the  space $\cS $, the Schwartz space of rapidly decreasing real-valued
functions $ u : \bbR\rightarrow\bbR$, we will exhibit a symplectic structure  on $\cS\times\cS$ and
Hamiltonian functions $ H_n : \cS \rightarrow \bbR$ such that the nth equation \KdV \ takes on the
form $u_t =2 \partial (\frac{\delta H_n   }{\delta u })$.

We can define
\begin{equation}
    H_n [u]:=\int v_{n}[u] =\frac{1}{2^{n}}\int \psi_{n}[u]\quad , \quad \textrm{ for } n\in \bbN .
\end{equation}
Then, by \eqref{eq-time-v},
\[
\frac{dH_n }{dt} =\frac{1}{2^{n} }\int -\partial J_{n} =0 .
\]
From (\cite{GD, olver1993applications}) the above, all of the gradients 
$$
\dfrac{\delta H_{n+1}}{\delta u}= \; 
v_{n+1} [u]\quad , \quad  n=1,2,3,...
$$
are solutions of the adjoint variational equations \eqref{ec-eliminarE}, as happens in finite dimension dynamical systems: the gradients of the first integrals are solution  of the adjoint variational equation, ie equivalently, the linear part of any  first integral of the dynamical system around the particular solution $z_0=z_0(t)$ is a first integral of the variational equation (see \cite{AMV}, p. 221). We remark  that for Hamiltonian systems this is more or less related  to Ziglin's lemma,  \cite{zi1,MR} (see also  \cite{morales}).

\color{black}

Each $H_n$ defines a flow compatible with KdV. The  symplectic form is the antisymmetrical operator $J=2\partial_x$.  Then,
        \[
          u_{t_n} = 2\partial\,\fdiff{H_{n+1}}{u} = \cD_2\,\fdiff{H_{n}}{u}.
        \]
with $\cD_2$ defining the Magrid second Poisson structure \cite{Magri1978}. Observe that all of these flows commute pairwise, forming the \emph{KdV hierarchy}. The differential equations of the standard KdV hierarchy are then
\begin{equation} \label{eq:hiercs} 
u_{t_n}= 2\partial(v_{n+1}) = \frac{1}{2^n }\partial \psi_{n+1}, \, n=1, 2, 3, ...,
\end{equation}
being the KdV equation \eqref{ecuacion-kdv1-MRZ} the first member of the hierarchy
$$ 
u_{t_1}=\frac12 \partial\left(-\dfrac 12 u_{xx}+\dfrac 32 u^2 \right).$$
 The Hamiltonian field defined by $H_n$ is then
\begin{equation}
X_n(u)=\frac{1}{2^n }\partial \psi_{n+1} ,
\end{equation}
and its action on functionals $F[u]$ is  the functional

\begin{equation}
(X_nF)=\int \dfrac {\delta F}{\delta u} X_n(u)\, dx=  \frac{1}{2^{n+1 }} \int \dfrac {\delta F}{\delta u}2\partial\psi_{n+1}\, =\{ F, H_{n+1}\},
\end{equation}
being  Poisson bracket between two functionals,
\begin{equation} \{F,G\} =2\int \dfrac {\delta F}{\delta u}\partial\dfrac {\delta G}{\delta u}\, .
\end{equation}
Moreover, taking into account the Lemma \ref{lem-v}, we obtain
\begin{equation*} \{H_n , H_m \}=2\int \dfrac {\delta H_n}{\delta u}\partial\dfrac {\delta H_m }{\delta u} = \int v_n \; \kdv_{m-1} =0 .
\end{equation*}
Then all the members of the KdV hierarchy, are in involution. That means that  not only any of the functionals $H_n$ are first integrals of KdV equation, but are also first integrals of any other equation of the KdV hierarchy \eqref{eq:hiercs}.

The generalized KdV hierarchy is obtained by substituting in  \eqref{eq:hiercs} $\psi_n$ by the  generalized coefficients in \eqref{eq-master-densities}, ie,

\begin{equation} \label{eq:hierce} u_{t_n}=2\frac \partial {\partial x} \left(\psi_n+c_1\psi_{n-1}+\cdots c_{n-1}\psi_1+c_n\right), \, n=1, 2, 3, ...,
\end{equation}
with $c_1 , \dots ,c_n$ arbitrary constants.


  \begin{rem} For this remark we follow \cite{mckean1975spectrum,MCL}.
Over the years, a variety of methods have been employed to derive the conserved densities of the KdV equation.

In \cite{GGKM}, the recursion \eqref{recu-1} was obtained from the squared symmetrical Lax pair \eqref{eq:slax_KdV} by means of a Miura transformation combined with an asymptotic expansion in an auxiliary parameter $\epsilon$.

In \cite{mckean1975spectrum}, the recursion \eqref{recu-1} was derived through a less direct approach: rather than working from the squared symmetrical Lax pair \eqref{eq:slax_KdV}, the authors proceeded from a partition function associated with the diffusion-like equation $\dfrac{\partial \phi}{\partial \tau} + L\phi = 0$.
\end{rem}

\begin{rem}
    \textbf{Connection with the Inverse Scattering Transform (IST).}
        The squared eigenfunctions of the associated Schr\"{o}dinger operator (Lax
        operator) are precisely the gradients $\dfrac{\delta H_n}{\delta u}$,
        revealing the deep connection between the bi-Hamiltonian structure and the IST.
\end{rem}

\begin{rem}

  {\it From the above, all of the gradients
{
$$\dfrac{\delta H_n}{\delta u}=\psi_n, n=1,2,3,...$$
}
are solutions of the adjoint variational equations}, as happens in finite-dimensional dynamical systems: the gradients of the first integrals are solutions of the adjoint variational equation, ie equivalently, the linear part of any  first integral of the dynamical system around the particular solution $z_0=z_0(t)$ is a first integral of the variational equation (see \cite{AMV}, p. 221). We remark  that for Hamiltonian systems this is more or less related  to Ziglin's lemma,  \cite{zi1,MR} (see also  \cite{morales}). Now in the Hamiltonian case, using $J=\partial_x$ from the solutions of the adjoint variational equation, we can obtain solutions of the variational equation (see section \ref{sec-VE}), ie, {\it the Hamiltonian fields of the hierarchy

$$X_n=\dfrac\partial{\partial x}\left(\dfrac{\delta H_n}{\delta u}\right), \, n=1,2,3,...$$
 are solutions of the variational equation \eqref{eq:vesta} }, as also happens in finite dimensional Hamiltonian systems: the Hamiltonian fields of the first integrals (restricted to the particular solution $z=z_0(t)$) are solutions of the variational equation along $z_0$; this can be seen by interpreting the variational equation as Lie symmetries fields , ie, $$[X, \xi]=0,$$
 restricted to  $z_0$ (see \cite{BCRS}). Now the key point is the connection between some solutions of the variational equations and the Lax pair: {\it solutions of  the squared Lax pair \eqref{eq:slax_KdV}  (given by product of solutions $\psi=\phi_1\phi_2$ of the Lax pair \eqref{eq:slax_KdV})  are also solutions of the adjoint variational equation \eqref{ec-eliminarE} and $\psi_x$ becomes solutions of the variational equation \eqref{eq:vesta}}. But, in order to obtain, from the above, {\it all the solutions of the variational equations on a  space defined by suitable boundary conditions,  it is an open problem to  prove a  completeness statement}.

\end{rem}

The subsequent section is devoted to a thorough analysis of separable-variable solutions arising from the variational equation associated with the KdV equation \eqref{eq:KdV-1}.

\section{Variational Equation around Cnoidal Waves}\label{sec-VE}

This section is devoted to the construction of closed-form solutions to the variational equation of KdV equation \eqref{eq:KdV-1} around the cnoidal wave \eqref{def-cnoidal-potential}. Other authors have studied the variational equation by relating it to square functions. See, for example, \cite{AiraultMcKeanMoser1977, Kapitula2008, Sachs1983, HaragusSattinger1998, PegoWeinstein1994}. Our approach relies on seeking separable-variable solutions, which naturally leads to the analysis of a coupled spectral problem for a certain third-order operator — one that proves to be of algebro-geometric type in the case of the cnoidal wave. Crucially, the geometry of the spectral curve associated with this new spectral problem plays a determining role in the structure of the resulting solution formulas. The present study draws on previous collaborative work of the third author with S. Rueda \cite{ZurroSevilla, RZ2021factoring}.

\bigskip

We consider the KdV equation 
\(
u_t = u_{zzz}-6u u_z  -cu_z ,
\)
defined in \eqref{eq:KdV-1}. Let F(u) be the differential polynomial 
\[F(u)=u_{zzz}-6u u_z  -cu_z \]
Assuming $F: X \rightarrow \coC$ with $X$ a complex Banach space, we can compute the Gateaux derivative of $F$ at $u$ in a generic direction $\xi \in X$,

\[
dF(u ;\xi) = \lim_{\epsilon\rightarrow 0}\frac{F(u+\epsilon\xi)-F(u)}{\epsilon}
\]
Observe that $F$  is Fr\'echet differentiable, then it is also Gateaux differentiable, and its Fr\'echet and Gateaux derivatives agree. In particular, the Gateaux derivative of $F$ at $u_0 =2\wp(z)-\frac{c}{6}  $ is 
\[
dF(u_0 ;\xi) = \lim_{\epsilon\rightarrow 0}
\frac{F(u_0+\epsilon\xi)-F(u_0)}{\epsilon}=
\xi_{zzz}-(6 u_0 +c) \xi_z -6u_{0,z} \xi \ .
\]
Then, considering the Gateaux derivatives at $u_0$ in the KdV equation, we obtain the linear differential equation called {\it the variational equation (VE) around the solution $u_0$ }
\[
\xi_t = \xi_{zzz}-(6 u_0 +c) \xi_z -6u_{0,z} \xi \ .
\]
To examine the solvability of this equation, we will first reformulate it as follows. Let $L_0 = -\partial_z^2 +(6 u_0 +c)= -\partial_z^2 + 12 \wp (z)$, \ and $M=\partial_z L_0 $ be the third order operator 
\begin{equation}\label{eq-operator-M}
M={-\partial_z^3+(6u_0+c)\partial_z  +6{u_0}_z
  } = \partial_z L_0 \,  .
\end{equation}
Then, the variational equation can be rewritten as 
{ \begin{equation}\label{eq:vesta}
 \xi_t= - M\xi \ .
\end{equation}}
We define the {\it formal adjoint variational equation} 
\begin{equation}\label{eq:avesta}
 \psi_t = - M^\dag \psi  ,  
\end{equation}
with $M^\dag = - L_0 \partial_z  =\partial_z^3-(6u_0 +c)\partial_z$.
Therefore, we confront the study of third-order operators.
\begin{equation}\label{eq-operator-M-cnoidal}
M=-\partial_z^3+12\wp(z)\partial_z  + 12\wp'(z) 
\end{equation}
and
\begin{equation}\label{eq:avesta-cnoidal}
    M^\dag  = \partial_z^3  -12\wp(z)\partial_z .
\end{equation}
Therefore, equation  \eqref{eq:avesta} is \eqref{ec-eliminarE} for $u=u_0 = 2\wp(z)-\frac{c}{6}   $. Observe that to solve equations \eqref{eq:vesta} and \eqref{eq:avesta}, we can take into account the following relationship:
\begin{equation}
\begin{array}{ccc}
     \eqref{eq:avesta} & \overset{\partial_z} \longrightarrow & \eqref{eq:vesta}\\
    \eqref{eq:avesta} & \overset{\partial_z^{-1}}\longleftarrow &\eqref{eq:vesta}
\end{array}\label{equiv-VEs}
\end{equation}

\subsection{Lifting solutions of the second symmetric power}\label{sec-lifting}

The adjoint variational equation  $ \psi_t = - M^\dag  \psi  =-(\partial_z^3  -12\wp(z)\partial_z ) \psi$, \eqref{eq:avesta}, has the following system of squared eigenfunctions solutions
\begin{equation}
    \psi_1= e^{2\mu t}\varphi_1^2 (z)   \quad ,  \quad           \psi_2= e^{2\mu t}\varphi_2^2 (z)   \quad ,  \quad    
     \psi_3= e^{2\mu t}\varphi_1 (z)  \varphi_2 (z) \quad ,  \ \mu\not=0,
\end{equation}
where $\{ \varphi_1 , \varphi_2  \}$ is a fundamental system of solutions of the operator $L-E$ defined in \eqref{eq:s-KdV1}, and $\{ \phi_1 = e^{\mu t}\varphi_1 , \ \phi_2 = e^{\mu t}\varphi_2 \}$ solves \eqref{eq:t-spectral} in separate variables. Let $\Gamma$ be the spectral curve associated to the spectral problem \eqref{eq:s-KdV1}. Let $Z=\Gamma \cap \{ \mu=0 \}$ be the branching points of $\Gamma$.

\begin{prop}
The functions $\psi_1 , \psi_2 , \psi_3$ are  linearly independent over $\coC$ at each point of $\Gamma \setminus Z$.
\end{prop}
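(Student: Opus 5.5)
The plan is to reduce the statement to the classical fact that products of pairs of solutions of a second-order operator span the solution space of its second symmetric power. The common factor $e^{2\mu t}$ never vanishes, so a relation $a\psi_1+b\psi_2+c\psi_3=0$ with constants $a,b,c\in\coC$ is equivalent to
\[
a\varphi_1^2+b\varphi_2^2+c\,\varphi_1\varphi_2=0
\]
as functions of $z$. Hence it suffices to show that $\varphi_1^2,\varphi_2^2,\varphi_1\varphi_2$ are linearly independent over $\coC$ for each fixed point $P=\mathcal N(s)\in\Gamma\setminus Z$.

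The key input is the discussion in Section~\ref{sec-B}. There we saw that for $P\notin Z$ the Wronskian quotient
\[
\frac{\operatorname{wr}(\varphi_1,\varphi_2)}{\varphi_1\varphi_2}=\widetilde\sigma_1-\widetilde\sigma_2=\frac{\mu(s)}{2\wp(z)-\frac c6+2\widetilde E(s)}
\]
is not identically zero, because $\mu(s)\neq0$ off $Z$. So $\varphi_1,\varphi_2$ form a fundamental system of $L-E$.

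The independence argument then runs as follows. Take an open set where $\varphi_2\neq0$ and set $r=\varphi_1/\varphi_2$, so $r'=-\operatorname{wr}(\varphi_1,\varphi_2)/\varphi_2^2\neq 0$ and $r$ is nonconstant. Dividing the relation by $\varphi_2^2$ gives $a r^2+c r+b=0$ identically. A nonconstant function takes infinitely many values, while a nonzero polynomial has at most two roots, so $a=b=c=0$.

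As an alternative and more intrinsic check, I would compute the Wronskian of the triple directly. Using $\varphi_i''=(u_0-E)\varphi_i$ and reducing the $3\times3$ determinant, one gets the classical identity
\[
\operatorname{wr}(\varphi_1^2,\varphi_1\varphi_2,\varphi_2^2)=2\,\operatorname{wr}(\varphi_1,\varphi_2)^3 ,
\]
which is nonzero off $Z$. This also shows that the three functions form a fundamental system of the third-order operator $(L-E)^{\odot 2}$ in \eqref{eq-secondSym}.

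The only delicate point is making sure the hypothesis $P\notin Z$ is genuinely what guarantees $\operatorname{wr}(\varphi_1,\varphi_2)\neq0$. At the branch points the Hermite--Halphen functions $\varphi_1,\varphi_2$ coincide up to a factor, namely the Lamé solution, so $\psi_1,\psi_2,\psi_3$ become proportional. That is why $Z$ must be excluded. I would cite the factorization formula \eqref{eq1-factor-L-cnoidal} and the Wronskian expression above to justify this, and check that the denominator $2\wp(z)-\frac c6+2\widetilde E(s)$ is not identically infinite. It is not: it is a nonconstant meromorphic function of $z$.
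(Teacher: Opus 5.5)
Your proof is correct, but its main argument takes a different route from the paper's. The paper argues only through the Wronskian. It writes $W(\psi_1,\psi_2,\psi_3)=e^{2\mu t}B$ and concludes $\det W=e^{6\mu t}\det B\neq 0$ simply by calling $B$ the fundamental matrix of $(L-E)^{\odot 2}$. In other words, it asserts without computation the very fact at stake, which is classical but requires $\operatorname{wr}(\varphi_1,\varphi_2)\neq 0$.

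Your primary argument avoids determinants altogether. Off $Z$ one has $\mu(s)\neq 0$, so $\operatorname{wr}(\varphi_1,\varphi_2)\neq 0$ and the ratio $r=\varphi_1/\varphi_2$ is nonconstant. A nonconstant function cannot satisfy a nontrivial polynomial relation of degree at most two over $\coC$. This is elementary, and it carries over verbatim to the differential-field setting: in characteristic zero, an element algebraic over the constants is itself constant.

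Your secondary check is precisely the paper's route, with its missing step supplied. Your identity $\operatorname{wr}(\varphi_1^2,\varphi_1\varphi_2,\varphi_2^2)=2\operatorname{wr}(\varphi_1,\varphi_2)^3$ gives $\det B=-2\operatorname{wr}(\varphi_1,\varphi_2)^3$ in the paper's column order. It needs no differential equation at all, since $\operatorname{wr}(\varphi_2^2r^2,\varphi_2^2r,\varphi_2^2)=\varphi_2^6\operatorname{wr}(r^2,r,1)=-2\varphi_2^6r'^3$ with $r'=-\operatorname{wr}(\varphi_1,\varphi_2)/\varphi_2^2$. So your two arguments are the same fact in qualitative and quantitative form. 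The Wronskian form buys more: it shows the three products form a fundamental system of $(L-E)^{\odot 2}$. It also shows that $\det B$, which is constant in $z$, vanishes exactly where $\operatorname{wr}(\varphi_1,\varphi_2)$ does, i.e.\ on $Z$.

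One small remark on your reduction step. By the paper's formula (35), the eigenfunction attached to $\iota(P)$ evolves with $e^{-\mu t}$. So, consistently, $\psi_2$ and $\psi_3$ should carry $e^{-2\mu t}$ and $1$ rather than a common factor $e^{2\mu t}$. Your argument survives unchanged, because freezing $t$ still reduces any constant-coefficient relation to one among $\varphi_1^2,\varphi_2^2,\varphi_1\varphi_2$ with nonzero rescaled coefficients.
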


\begin{proof}
 Let us consider the Wronskian  matrix 
    \[
   W (\psi_1 , \psi_2 , \psi_3 ) = \begin{pmatrix}
      \psi_1 & \psi_2 & \psi_3  \\
    \partial  \psi_1 & \partial\psi_2 & \partial\psi_3  \\
     \partial^2 \psi_1 &\partial^2   \psi_2 & \partial^2  \psi_3  
    \end{pmatrix} = e^{2\mu t} \begin{pmatrix}
      \varphi^2_1 & \varphi^2_2 & \varphi_1   \varphi_2\\
    \partial  \varphi^2_1 & \partial\varphi^2_2 & \partial(\varphi_1   \varphi_2) \\
     \partial^2 \varphi^2_1 &\partial^2   \varphi^2_2 & \partial^2  (\varphi_1   \varphi_2)  
    \end{pmatrix} = e^{2\mu t} B
    \]
with $B$ the fundamental matrix of the second symmetric power of $L-E = -\partial^2 +u_0 -E$.  Then, taking the determinants, we obtain
\[
wr (\psi_1 , \psi_2 , \psi_3 ) =e^{6\mu t} \det (B) \not= 0 .
\]
\end{proof}

\begin{cor} The  variational equation $\xi_t= -M \xi$, \eqref{eq:vesta}, has a  three-dimensional $\coC$-linear space of solutions generated by 
\[
\widetilde{\psi_1 } = e^{2\mu t}\partial\varphi^2_1
\quad , \quad 
\widetilde{\psi_2 } =  e^{2\mu t}\partial\varphi^2_2
\quad , \quad 
\widetilde{\psi_3 } =  e^{2\mu t}\partial (\varphi_1 \varphi_2 )
\quad , 
\]
over the spectral curve $\Gamma \setminus Z$ associated with the spectral problem     \eqref{eq:s-KdV1}.
\end{cor}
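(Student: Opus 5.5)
The plan has two parts. First, each of $\widetilde{\psi_1},\widetilde{\psi_2},\widetilde{\psi_3}$ solves the variational equation \eqref{eq:vesta}. Second, the three are linearly independent over $\coC$ at every point of $\Gamma\setminus Z$, so they span a three-dimensional space.

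For the first part I will use the intertwining relation behind the diagram \eqref{equiv-VEs}. Since $M=\partial_z L_0$ and $M^\dag=-L_0\partial_z$, one has
\[
\partial_z\circ M^\dag=-\partial_z L_0\partial_z=-M\circ\partial_z .
\]
Hence, if $\psi$ solves the adjoint equation, $\partial_z\psi$ solves the variational equation. The time derivative commutes with $\partial_z$, and the operators $M,M^\dag$ are $t$-independent because $u_0$ is stationary. By the previous proposition together with Corollary \ref{cor-sol-second-symm}, the functions $\psi_1,\psi_2,\psi_3$ solve \eqref{eq:avesta}. Applying $\partial_z$ then gives the three $\widetilde{\psi_i}$.

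The step I expect to need real care is a sign check. I will write out $-M^\dag\psi$ from \eqref{ec-eliminarE} and differentiate it, comparing the result with $-M\partial_z\psi$ using $M=-\partial^3+(6u_0+c)\partial+6u_{0,z}$. If the direct expansion produces $(\partial\psi)_t=+M\partial\psi$ instead of $-M\partial\psi$, I will absorb the discrepancy using the hyperelliptic involution $\iota$. This replaces $\mu$ by $-\mu$, equivalently reverses the sign of the exponent $e^{2\mu t}$. It preserves $\Gamma\setminus Z$ and maps the family of squared eigenfunctions to itself, so the spanned space over the curve is unchanged.

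For independence, suppose $a\widetilde{\psi_1}+b\widetilde{\psi_2}+c\widetilde{\psi_3}=0$ with $a,b,c\in\coC$. Dividing by $e^{2\mu t}$ gives $\partial_z(a\varphi_1^2+b\varphi_2^2+c\varphi_1\varphi_2)=0$. So $\chi:=a\varphi_1^2+b\varphi_2^2+c\varphi_1\varphi_2$ is a constant $k$. But $\chi$ solves the second symmetric power \eqref{eq-secondSym}, and substituting a constant gives $2u_{0,z}k=0$. Since $u_{0,z}=2\wp'(z)\not\equiv0$, this forces $k=0$. Then $\chi=0$, and the preceding proposition (nonvanishing of $\det B$ on $\Gamma\setminus Z$) yields $a=b=c=0$.

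Hence the three derivatives are linearly independent and span a three-dimensional $\coC$-space of solutions of \eqref{eq:vesta} at each point of $\Gamma\setminus Z$. This is the claimed statement; the only delicate point is the sign convention discussed above.
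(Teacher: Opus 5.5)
\textbf{The gap is in your first part, and it cannot be closed: the claimed solution property is false for $\mu\neq0$.} Two things go wrong.

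First, Corollary~\ref{cor-sol-second-symm} only covers $e^{2\mu t}\varphi^2$ when $\varphi$ is a joint eigenfunction at the given point $P=(E,\mu)$. For $\mu\neq 0$, $\widetilde{A}$ acts on $\ker(L-E)$ with the two distinct eigenvalues $\pm\mu$, since its matrix $W_0$ is traceless and $\det(W_0-\mu)=\mu^2-P(E)$. So the Hermite--Halphen pair sits at $P$ and $\iota(P)$: say $\widetilde{A}\varphi_1=\mu\varphi_1$ and $\widetilde{A}\varphi_2=-\mu\varphi_2$. The computation behind \eqref{ec-eliminarE} gives $\varphi_1\widetilde{A}\varphi_2+\varphi_2\widetilde{A}\varphi_1=-M^\dag(\varphi_1\varphi_2)$ for $\varphi_i\in\ker(L-E)$. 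From it you get
$M^\dag(\varphi_1^2)=-2\mu\varphi_1^2$, $M^\dag(\varphi_2^2)=2\mu\varphi_2^2$ and $M^\dag(\varphi_1\varphi_2)=0$.
Hence the adjoint solutions carry the factors $e^{2\mu t}$, $e^{-2\mu t}$, $1$, not a common $e^{2\mu t}$.

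Second, the sign you flagged is really there. Your own identity $\partial_z M^\dag=-M\partial_z$ gives $(\partial_z\psi)_t=+M\partial_z\psi$ whenever $\psi_t=-M^\dag\psi$. So \eqref{equiv-VEs} lands in the time-reversed equation, not in \eqref{eq:vesta}.

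A concrete check: the paper's formula $\varphi_1\varphi_2=\wp(s)-\wp(z)$ gives $\widetilde{\psi_3}=-e^{2\mu t}\wp'(z)$. Using $\wp'''=12\wp\wp'$ one finds $M\wp'=0$. Hence $\partial_t\widetilde{\psi_3}=2\mu\widetilde{\psi_3}\neq 0=-M\widetilde{\psi_3}$ for $\mu\neq0$, whatever sign you put in the exponent.

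Your $\iota$-repair cannot help. It swaps $\varphi_1\leftrightarrow\varphi_2$, so it speaks about a different triple. It also reverses all three exponents at once, whereas the correct exponents are $-2\mu$, $2\mu$, $0$. With the labelling above, only $\widetilde{\psi_2}$ of the displayed triple is a genuine solution, because the two slips cancel there.

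What your argument does prove, after these corrections, is that $e^{-2\mu t}\partial_z(\varphi_1^2)$, $e^{2\mu t}\partial_z(\varphi_2^2)$ and $\partial_z(\varphi_1\varphi_2)$ solve $\xi_t=-M\xi$. This matches the ansatz $\xi=e^{-\nu t}\eta$, $M\eta=\nu\eta$ of Section~\ref{sec-nu} with $\nu=2\mu,-2\mu,0$.

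Your independence step is correct. It is equivalent to the paper's identity $\det W(\widetilde{\psi_1},\widetilde{\psi_2},\widetilde{\psi_3})=2e^{6\mu t}u_{0,z}\det B$, obtained by reducing $\partial^3\psi$ through the symmetric-power equation. It also survives the distinct time factors: set $t=0$. The paper's proof consists only of that Wronskian computation and takes solution-hood for granted from \eqref{equiv-VEs}, so it has the same defect.
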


\begin{proof}  Let us consider the Wronskian  matrix $ W (\widetilde{\psi_1 }  , \widetilde{\psi_2 }  , \widetilde{\psi_3 } ) $. Then, taking the determinants, we obtain
\[
\det 
\left(
 W (\widetilde{\psi_1 }  , \widetilde{\psi_2 }  , \widetilde{\psi_3 } ) 
\right) = 2\;e^{6\mu t } \: u_{0,z} \cdot \det B 
\]
with $B$ the fundamental matrix of the second symmetric power of $L-E = -\partial^2 +u_0 -E$.  
    
\end{proof}

\medskip

\noindent {\bf On the static solutions of the variational equation.} The adjoint variation equation has also solutions that are polynomials in $E$. In particular, for any $\ell >0$ and $f\in \coC[E]$ we defined in \eqref{def-Q-ell} the following solution of  \eqref{eq:avesta} which is a polynomial in $\coC\{u\}[E]$:
\[
  Q_{\ell , f } (E) = f_{[0]}(E) v_0 +\cdots + f_{[\ell]}(E) v_\ell
\]
Observe that $\partial_t (  Q_{\ell , f } (E)  )=0$ since $\partial_t (u_0 )=0$. Then
\[
\widetilde{Q_{\ell , f } }(E)=  \partial (  Q_{\ell , f } (E)  ) = f_{[1]}(E)  \partial v_1 +\cdots + f_{[\ell]}(E)  \partial v_\ell
\]
is a  static solution of the variational equation \eqref{eq:vesta}.

Another type of static solutions of \eqref{eq:vesta} appears when consider the Hermite solutions  \eqref{sol-L} of the Lam\'e equation \eqref{eq:hLame}. In fact, this correspond to the points in $Z$, the branching  points of $\Gamma$. In this case, the second symmetric power of the Lam\'e equation has solutions
\begin{equation}
    \psi_1= \varphi_1^2 (z)  = \wp(z, g_2 , g_3 ) +\Tilde{e_1 } \quad ,  \quad           \psi_2= \varphi_2^2 (z)   =\wp(z, g_2 , g_3 ) +\Tilde{e_2 } \quad ,  
\end{equation}
and 
\[
\quad    
     \psi_3= \varphi_1 (z)  \varphi_2 (z) =\sqrt{(\wp(z, g_2 , g_3 ) +\Tilde{e_1 } )( \wp(z, g_2 , g_3 ) +\Tilde{e_2 }) }
     \quad .  \ 
\]
Their Wronskian is
\[
wr (\psi_1 , \psi_2 , \psi_3 ) = -\frac14 \frac{(\wp'(z) )^3 ( \tilde{e_1}-\tilde{e_2 } )^3}{(\psi_1 \psi_2 )^{3/2}}\not=0 \ .
\]
Consequently, the static associate solutions to \eqref{eq:vesta} are no longer independent since $ \partial \psi_1 = \partial \psi_2$. Furthermore, denoting $\widetilde{\psi_i } = \partial \psi_i$  the linear independence of \(\tilde{\psi _{2}}\) and \(\tilde{\psi _{3}}\) is established by the \(2 \times 2\) minor within the Wronskian matrix, specifically across the second and third columns, which produces a determinant of
\[
-\frac14 \frac{(\wp'(z) )^3 ( \tilde{e_1}-\tilde{e_2 } )^2}{(\psi_1 \psi_2 )^{3/2}}\not=0 \ .
\]
\bigskip

In what follows, we present two classes of solutions to the variational differential equation using the recent spectral Picard-Vessiot techniques for the tird order operator, \cite{RZ2024}. The first type is obtained through a separation of variables approach, see Section \ref{sec-nu}, whereas the second one comprises what we refer to as {\it static solutions}, see Section \ref{sec-nu-0}. This new type of solution has the advantage of being generally a three-parameter solution, which expands the possibility of perturbing the conical wave. We provide some preliminary results for achieving these goals in Appendix \ref{sec-Order-3}.



\subsection{Solutions over the Spectral Curve}\label{sec-nu}

Let us consider the problem of solving the variational equation \eqref{eq:vesta} around the cnoidal wave $u_0 (z)=2\wp\left( 
      {z} ; {g_2 }  , {g_3 }  \right) -\frac{c}{6}$, 
 \begin{equation}\label{eq-ve-nu-not0}
 \xi_t = - M\xi \ .
\end{equation}
where $ M={-\partial_z^3+(6u_0+c)\partial_z  +6{u_0}_z
} = -\partial_z^3+12\wp(z)\partial_z  + 12\wp'(z) $ is the third-order differential operator introduced in \eqref{eq-operator-M}. We seek solutions of equation \eqref{eq-ve-nu-not0} in separable form, 
\begin{equation}
 \xi(z, t )=e^{-\nu t} \eta(z) \ .   
\end{equation}
This approach leads to the following differential equations
\begin{equation}\label{ec-var-no-not-0}
    M\ {\eta}=\nu \ {\eta} \quad , \quad \textrm{for }\nu\not= 0 \ ,
\end{equation}
and, for $\nu= 0$, also  we must solve the equation
\begin{equation}\label{def-VE-tangencial}
    M\ {\eta}=0 ,
\end{equation}
that provides {\it static solutions } since in this case  $\xi(z ,t)= \eta(z)$. We will study this case in section \ref{sec-nu-0}. We call equation  \eqref{def-VE-tangencial} \ {\it the tangential variational equation}

\medskip

Next, we will explain the method we follow to find a solution to equation \eqref{ec-var-no-not-0}. 

First, it is necessary to compute the centralizer  $\cZ (M)$ of the operator $M$. In the case of the cnoidal wave, this centralizer is nontrivial. It is generated, as a $\coC[M]$-module, by the following operators, computed by direct calculations using Maple,
\begin{align*}
           A_1 =&\partial^7 -28\wp(z)\partial^5 -84\wp' (z)\partial^4 -560\wp(z)\partial^3 -560\wp(z)\wp'(z)\partial^2 +\\ 
           &\left(-6720 (\wp(z))^3+1400 (\wp'(z))^2+2352 \wp(z) g_2 +1560 g_3 \right)\partial+\\
           &784\wp'(z) g_2 \\
           A_2 =&\partial^5-20\wp(z)\partial^3-40\wp'(z)\partial^2 +\left(
   -120 (\wp(z))^2 +18 g_2 
   \right)\partial +1 .
\end{align*}
Observe that $A_1 $ is an operator of order $7$ and $A_2$ has order $5$, as is required in Theorem \ref{thm-Gooddearl-forOrder3}. 
Moreover, we obtain the differential ring isomorphism
\begin{equation*}
    \cZ(M)  \simeq \coC[\widetilde{\Gamma}]=\frac{\coC [\lambda, \mu_1 ,\mu_2]}{\BC(M)} \ 
\end{equation*}
where $\BC(M) $ is the Burchnall-Chaundy ideal  gerenated by 
\begin{align*}
f_1  (\nu , \mu_1 )=&{\nu}^{7}+32\,{ g_3}\,{\nu}^{5}+ 256\left( -1088\,{{ g_2}}^{3}+
\,{{ g_3}}^{2} \right) {\nu}^{3}+25600\,{{ g_2}}^{2}\mu_{{1}}{\nu}
^{2}-\\ 
&784\,{ g_2 }\,{\mu_{{1}}}^{2}\nu+8\,{\mu_{{1}}}^{3}\\
f_2  (\nu , \mu_2 ) = & {\nu}^{5}+16\,{ g_3}\,{\nu}^{3} -4\,{ g_2}\left( \,\mu_{{2}}-1 \right) {\nu}^{2}+
  ( \,\mu_{{2}}-1  )^3 \\
f_3  (\mu_1 , \mu_2 )=&{\mu_{{2}}}^{7}-7\,{\mu_{{2}}}^{6}+ a_{5}{\mu_{{2}}}^{5}+ a_{4} {\mu_{{2}}}^{4}+a_3  {\mu_{{2}}}^{3}+  a_2  {
\mu_{{2}}}^{2} +a_1  \mu_{{2}} +a_0 \ ,
\end{align*} 
with $a_{5}= 2176\,{\ g_2 }\,{ g_3 }+21 $, 
 $a_{4}= -44608\,{ g_{2}}^{2}\mu_{{1}}-10880
\,{ g_2}\,{ g_3}-35 $, \break 
$a_3= -1287913472\,{
{ g_2 }}^{5}+1183744\,{ g_{2}}^{2}{ g_{3}}^{2}+178432\,{ g_{2}}
^{2}\mu_{{1}}+64\, g_{3}\,{\mu_{{1}}}^{2}+21760\,{ g_{2}}\,{ g_{3}}+
35 $, 
$a_2 =  3863740416\,{{ g_2 }}^{5}-29593600
\,{{ g_{2}}}^{3}{ g_{3}}\,\mu_{{1}}-3551232\,{{ g_{2}}}^{2}{{ g_{3}}}^
{2}-2688\,{ g_{2}}\,{\mu_{{1}}}^{3}-267648\,{{ g_{2}}}^{2}\mu_{{1}}-
192\,{ g_{3}}\,{\mu_{{1}}}^{2}-21760\,{ g_{2}}\,{ g_{3}}-21 $,
$a_1 =  5030912\,{{ g_{2}}}^{4}{\mu_{{1}}}^{2}-
3863740416\,{{ g_{2}}}^{5}+59187200\,{{ g_{2}}}^{3}{ g_{3}}\,\mu_{{1}}
+3551232\,{{ g_{2}}}^{2}{{ g_{3}}}^{2}+5376\,{ g_{2}}\,{\mu_{{1}}}^{3}
+178432\,{{ g_{2}}}^{2}\mu_{{1}}+192\,{ g_{3}}\,{\mu_{{1}}}^{2}+10880
\,{ g_{2}}\,{ g_{3}}+7 $, 
$a_0 = -5030912\,{{ g_{2}}}^{4}{\mu_{
{1}}}^{2}-591872\,{{ g_{2}}}^{2}{ g_{3}}\,{\mu_{{1}}}^{3}+1287913472\,
{{ g_{2}}}^{5}-29593600\,{{ g_{2}}}^{3}{ g_{3}}\,\mu_{{1}}-32\,{\mu_{{
1}}}^{5}-1183744\,{{ g_{2}}}^{2}{{ g_{3}}}^{2}-2688\,{ g_{2}}\,{\mu_{{
1}}}^{3}-44608\,{{ g_{2}}}^{2}\mu_{{1}}-64\,{ g_{3}}\,{\mu_{{1}}}^{2}-
2176\,{ g_{2}}\,{ g_{3}}-1$. Observe that $\deg_{\mu_{1}} (a_i )\leq 5-i$ for $i=0, \dots , 5$.

The spatial spectral curve $\tilde{\Gamma}$ associated to $M$ is defined by the polynomials $f_1 , f_2 , f_3 $. Moreover we can consider each $f_i$ as defining polynomial for a plane spectral curve. In this case, these curves $f_i = 0$ are elliptic curves, whose Weierstrass normal form can be derived so as to yield a parametric representation of the spectral curve. This can be accomplished using the Maple package, whose implemented algorithm is based on the work of M. van Hoeij \cite{van1995algorithm}.

\medskip

\noindent{\bf The intrinsic right factor}.
The methods designed in \cite{RZ2024} allow factoring $M-\nu$ (see \eqref{eq-globalFac} in Appendix \ref{sec-Order-3}). The right factor $\partial+\sigma_2$ whose solution is $\eta_2 (z)$ is given by
\[
\sigma_2 (z,s)={\frac {N(z,s)}{D(z,s)}}
\]
where

    \begin{align*}
     N(z,s)&=-{\nu}^{3}-4\,{\it\wp'} \left( z
 \right) {\nu}^{2}+\left(7104\,  {\it 
\wp} \left( z \right)   ^{3} -1072\,{\it\wp} \left( z \right) { g_2 }-720\,{ g_3 }\right)\nu+  \\
& \ \left(2\,{ g_2 }-24\,  {\it 
\wp} \left( z \right)   ^{2}\right)\mu_{{2}
}+  \\
&-4320\,{\it\wp} \left( z \right) {\it 
\wp'} \left( z \right) { g_2}+24\,
 \left( {\it\wp} \left( z \right) 
 \right) ^{2}-2\,-2880\,{\it \wp'} \left( z \right) { g_3}+
28800\, \left( {\it\wp} \left( z \right) 
 \right) ^{3}{\it\wp'} \left( z
 \right),\\
D(z,s)&=4\,{\it\wp} \left( z \right) {\nu}^{2}+\nu\,\mu_{{2}}+\left(16\,{\it\wp} \left( z \right) {\it\wp'} \left( z \right) -1\right)\nu+4\,{\it\wp'} \left( z \right) \mu_{{2}}-4\,{\it\wp'} \left( z\right) , 
\end{align*}
and $\nu=\nu (s)$, $\mu_2 = \mu_2 (s)$ is a parametrization of the curve $\{f_2 =0 \}$ in $\coC^2_{(\nu , \mu_2 )}$.

Consequently, we  obtain a global right-hand factor, which is compatible with $\partial+\sigma_1 $ and $\partial+\sigma_3$, and give rise to a global function $\eta$ over a Zariski open subset of the spectral curve -- a solution to the spectral problem~\eqref{ec-var-no-not-0}. This function is consequently parametrized by the points of the spectral curve associated with the operator~$M$. In particular,  we obtain in the $(\nu ,\mu_2 )$--plane, the first order differential equation
\[
D(z,s ) \partial \eta_2 + N(z,s) \eta_2 =0 \ .
\]
Observe that $\eta = \eta_2 (z,s ) =\eta_2 (z, P)$ for $P=(\nu , \mu_1 ,\mu_2 )\in\tilde{\Gamma}\setminus Z$ in $\bbC^3_{( \nu ,\mu_1 , \mu_2 )}$. Thus,  we obtain the formula
\[
\xi = e^{-\nu t }\eta (z, P)=e^{-\nu t }\eta (z, \nu ,\mu_1 , \mu_2 ) ,
\]
whenever $D(z,s )\not=0$ and $\nu\not=0$.

\subsection{Static Closed-Form Solutions}\label{sec-nu-0}

In this section we compute a basis of solutions for the tangential variational equation
\begin{equation}\label{ec-nu=0-for sol}
    M\ {\eta}=0 ,
\end{equation}
where $ M={-\partial_z^3+(6u_0+c)\partial_z  +6{u_0}_z
} = -\partial_z^3+12\wp(z)\partial_z  + 12\wp'(z) $. We will use the base of the solutions obtained in \ref{sec-A}.

Next, we will proceed to provide static solutions of the variational equation of the KdV equation around the cnoidal wave $u_{0} (z)=2\wp(z)-\frac{c}{6}$. From \eqref{eq:vesta}. we now deal with the problem of finding solutions to the equation
\begin{equation}
   M\xi = ( -\partial_z^3+12\wp(z)\partial_z  + 12\wp'(z) )\xi =0.
\end{equation}
Consequently, we face the study of the third-order operator $M$.
The adjoint variational equation is now
\begin{equation}\label{eq:avesta-cnoidal}
  M^\dag \psi  =(\partial_z^3+12\wp(z)\partial_z)\psi =0 .
\end{equation}
To solve equation \eqref{eq:avesta-cnoidal}, we will use the base of solutions obtained previously associated with the spectral problems of the operator Schr\"odinger and its spectral  curve $\Gamma$ defined in \eqref{eq:cnsp}.

First of all, let us recall that we have obtained the following solutions of the spectral problem associated with the points $P=(E,\mu)\in \Gamma$.

\begin{enumerate}
    \item At non branch points, we have obtained a basis of solution $\{ \phi_1 , \phi_2 \} $ of\break  $L-E=-\partial_z^2 +2\wp (z)-\wp (\imath s)$ with
    \begin{equation*}
        \varphi_i (z,s ) =\frac{{\bm \sigma} (z+(-1)^{i+1} s )}{{\bm \sigma} (s  ){\bm \sigma} (z)}e^{(-1)^{i} \, z \, {\bm\zeta} (s  )} \ ,   \ , \quad i =1, 2 ,
    \end{equation*}
    for $(z,s ) \in \left( \Omega\setminus \{ \omega_1 , \omega_2 ,\omega_3 \} \right)\times \left( \Omega\setminus \{ \omega_1 , \omega_2 ,\omega_3 \} \right)$.
    \item At the branch points $B_j = (-e_j , 0)$, $j=1, 2, 3$, we have obtained a basis of solution $\{  \widehat{\varphi_{j, e_j } } , \varphi_{j, e_j} \} $ of $L-E=-\partial_z^2 +2\wp (z)+ e_j $ with
    \begin{equation*}
         \widehat{\varphi_{j, e_j } }:=1/\sqrt{\wp(z)-{e_j }} \ , \quad \varphi_{j, e_j } = \sqrt{\wp(z)-{e_j }},
    \end{equation*}
    for $z\not= \omega_i $ in a fundamental domain $\Omega$
\end{enumerate}

\color{black}

In sections  \ref{sec-B} and \ref{sec-A}, we have obtained a closed form solutions basis of the adjoint variational equation $0 = M^\dag \psi$, since $u_0$ is an algebrogeometric potential. We detail the formulas that we have obtained in the case of the cnoidal wave.

\begin{enumerate}
    \item At non branch points of $\Gamma$, we have obtained a basis of solution $\{ \psi_1 , \psi_2 , \psi_3 \} $  with
    \begin{equation*}
       \psi_i = \varphi_i (z,s )^2  , \quad i =1, 2 , \quad \psi_3 = \varphi_1  \varphi_2  =\frac{{\bm \sigma} (z+ s ){\bm \sigma} (z- s )}{{\bm \sigma} (s  )^2 {\bm \sigma} (z)^2 } =\wp (s)-\wp (z)\ ,
    \end{equation*}
    for $(z,s ) \in \left( \Omega\setminus \{ \omega_1 , \omega_2 ,\omega_3 \} \right)\times \left( \Omega\setminus \{ \omega_1 , \omega_2 ,\omega_3 \} \right)$. 
    \item At the branch points $B_j = (-e_j , 0)$, $j=1, 2, 3$, we have obtained a basis of solution $\{ \psi_{1, e_j } , \psi_{2, e_j } , \psi_{3, e_j } \} $  with
    \begin{equation*}
        \psi_{1, e_j } (z)= ( \widehat{\varphi_{j, e_j } } )^2 \ , \  \psi_{2, e_j } (z) = (\varphi_{j, e_j } )^2  \  , \  \psi_{3, e_j }=1 \ ,
    \end{equation*}
    for $z\not= \omega_i $ in a fundamental domain $\Omega$
\end{enumerate}

Consequently, we obtain the following formulas for the corresponding (static) solutions of the variational equation $0=\xi_t = M \xi$.

\begin{enumerate}
    \item At non-branched points of $\Gamma$, we have obtained a basis of solution $\{ \widetilde{\psi_1 }, \widetilde{\psi_2 }, \widetilde{\psi_3 } \} $  with
    \begin{equation*}
       \widetilde{\psi_i } = \partial_z (\psi_i )  , \quad i =1, 2 , \quad \widetilde{\psi_3 }= \partial_z ( \psi_3 ) = -\wp' (z)\ ,
    \end{equation*}
    for $(z,s ) \in \left( \Omega\setminus \{ \omega_1 , \omega_2 ,\omega_3 \} \right)\times \left( \Omega\setminus \{ \omega_1 , \omega_2 ,\omega_3 \} \right)$.
    \item At the branch points $B_j = (-e_j , 0)$, $j=1, 2, 3$, we have obtained a solution basis $\{ \widetilde{\psi_{1, e_j }} , \widetilde{\psi_{2, e_j }}  \} $  with
    \begin{equation*}
       \widetilde{\psi_{1, e_j }} =\partial_z ( \psi_{1, e_j })= \frac{-\wp' (z)}{(\wp (z)-e_j )^2 } \ , \quad  \widetilde{\psi_{2, e_j } } = \partial_z ( \psi_{2, e_j })= -\wp'(z)  \  , \  
    \end{equation*}
    for $z\not= \omega_i $ in a fundamental domain $\Omega$. 
\end{enumerate}

Let us observe that equation \eqref{ec-nu=0-for sol} is the equation \eqref{ec-aux-Lame} corresponding to the second symmetric power of the algebrogeometric Schr\"odinger operator \eqref{eq-o2-conoidal} for $\Tilde{E}= E+\frac{c}{6}$. Therefore, we obtain the following closed form formulas for a basis of solutions of \eqref{ec-nu=0-for sol}:
\begin{equation}
    \eta_1 (z)= \wp (z)+\Tilde{e}_1 \ , 
     \eta_2 (z)= \wp (z)+\Tilde{e}_2 \ , 
      \eta_3 (z)= [ \eta_1 (z)  \eta_2 (z) ]^{1/2} .
\end{equation}
with $\Tilde{e}_i $ the zeros of the polynomial $4 X^3-g_2 X -g_3$ associated to the elliptic curve $\Gamma$ and the Weiestrass funcion $\wp (z) = \wp  (z; g_2 , g_3 )$.

\begin{rem}
 The hidden time evolution of these solutions can be studied by reversing the change of variable, and then considering the cnoidal wave at $(x,t)$. Consequently, we obtain:
    \begin{equation}
    \frac{\partial \eta_1 }{\partial t} (x,t)= -c\wp' (x-ct) \ , 
     \frac{\partial \eta_2 }{\partial t} (x,t)= -c\wp' (x-ct)\ , 
\end{equation}
 \begin{equation}
         \frac{\partial \eta_3}{\partial t} (x,t)= 
        -c\wp' (x-ct)  \frac{   (2\wp' (x-ct)-\Tilde{e}_3 )  }{2 \eta_3 (x,t) } 
\end{equation}   
\end{rem}

\bigskip

\section{Conclusions}

This work establishes a direct bridge between the variational equation of the
KdV equation around a cnoidal wave and the spectral--Galoisian framework of
algebro-geometric differential operators. Starting from the Lam\'e--Schr\"odinger
operator associated with the cnoidal background, we show that the solutions of
the variational and adjoint variational equations can be constructed
systematically from the geometry of the corresponding spectral curves.

The first original contribution of the paper is the use of the second
symmetric power of the Schr\"odinger operator as a unifying mechanism
connecting three apparently different objects: squared eigenfunctions,
conserved densities of the KdV hierarchy, and solutions of the variational
equations. In this way, the classical Gel'fand--Dikii recursion, the
Hamiltonian structure of KdV, and the differential-Galoisian viewpoint are
incorporated into a single framework.

A second contribution is the explicit construction of closed-form solutions
of the variational equation around the cnoidal wave. On the one hand,
solutions are obtained from the Hermite--Halphen eigenfunctions of the Lam\'e
equation and their squared products. On the other hand, a new family of
solutions is derived from the spectral analysis of the third-order operator
governing the variational equation itself, using its nontrivial centralizer
and the associated spatial spectral curve $\Gamma_1$. This provides explicit
formulas parametrized by points of the spectral curve and yields a spectral
interpretation of the variational dynamics.

A third contribution is the identification of a remarkable compatibility
between three independent constructions: the spectral Picard--Vessiot
approach for the Schr\"odinger operator, the squared-eigenfunction formalism,
and the factorization theory of third-order algebro-geometric operators. At
the branch points of the spectral curve $\Gamma_0$ these constructions
coalesce and produce the same distinguished solutions, revealing a special
geometric role for the ramification locus $Z$.

\medskip
\noindent\textbf{Towards a Galoisian reading of the ramification locus.}
The compatibility phenomenon established in Theorem B admits a natural
Galoisian reformulation. Recall that, in the theory of obstructions to
integrability initiated by Ziglin and developed in the differential-Galoisian
setting by Morales-Ruiz and Ramis, the linearization of a Hamiltonian system
around a particular solution --- the variational equation --- carries a
differential Galois group whose non-virtual-abelianity obstructs
integrability. In the present setting, the spectral curve $\Gamma_0$ plays the
role of a moduli space for the Picard--Vessiot extensions of the
Schr\"odinger operator (Section~2.1), and the differential Galois group of
the variational equation along the cnoidal background should therefore be
understood as a group scheme fibered over $\Gamma_0$, generically of maximal
type away from~$Z$.

Theorem~B shows that at the branch points $B_1, B_2, B_3 \in Z$ this fibered
structure undergoes a sharp collapse: the two Hermite--Halphen eigenfunctions
merge into a single Lam\'e solution, the squared-eigenfunction construction
degenerates into static solutions, and the intrinsic factorization of $M$
localizes onto the same distinguished family. We regard this as strong
structural evidence for the following:

\begin{conjecture}
The differential Galois group of the variational equation, viewed as a group
scheme over the spectral curve $\Gamma_0$, undergoes a proper degeneration
(a drop in dimension or in the type of its identity component) exactly over
the ramification locus~$Z$. 
\end{conjecture}

Establishing this conjecture rigorously requires computing the differential
Galois group of the variational equation directly --- rather than inferring
its behaviour from the compatibility of solution families --- and relating
this computation to the monodromy of the fibration $\Gamma_0 \to
\mathbb{P}^1$ near~$Z$. We expect the third-order operator~$M$ and its
centralizer $Z(M)$, whose Burchnall--Chaundy ideal was computed explicitly in
Section~4.2, to provide the right algebraic framework for this computation,
since the degeneration of $Z(M)$ near $Z$ should mirror the degeneration
described in Theorem~B at the level of solutions.

Beyond the case of cnoidal waves treated here, we conjecture that this
phenomenon is not an artifact of genus one: for higher finite-gap
potentials, the ra\-mification locus of the associated hyperelliptic spectral
curve should analogously mark the loci of Galoisian degeneration of the
variational equation along the corresponding higher members of the KdV
hierarchy. Establishing this connection in a fully Galoisian setting, and
extending the present spectral-geometric description of variational
dynamics beyond genus one, constitutes the natural direction for future
research.

\bigskip

\centerline{\rule{5cm}{0.4pt}}

\bigskip

\noindent {\sf Acknowledgments:} We kindly thank all members of the Integrability Madrid Seminar for many fruitful discussions:  R. Hern\'andez-Heredero, S. Jim\'enez, A. Jim\'enez-Pastor, A. P\' erez-Raposo, J. Rojo Montijano, S. Rueda and R. S\'anchez; the member in Colombia: D. Bl\' azquez-Sanz, and the member in Rep\'ublica Dominicana: P.B. Acosta-Hum\' anez.
\para

The first author is indebted to the Dynamical System Research Group
of the
Department of Applied Mathematics and Physics of Kyoto University and,
in
particular, to Shotaro Yamazoe And Kazuyuki Yagasaki for some
motivating
discussions when this author stays as a visiting professor  at
Kyoto
University on the autum of 2018.

\para

The first author is a member of the UPM Research Group ``Modelos ma\-tem\'aticos no li\-neales" and the SPECTINGAL group of the ETSE school,  UPM.  M. A. Zurro is partially supported by the grant PID2021-124473NB-I00, ``Algorithmic Differential Algebra and Integrability" (ADAI)  from the Spanish MICINN.

\appendixpage

\appendix


\section{On KdV differential polynomials and KdV ordinary differential operators}\label{sec-KdV-polinomials}

Let $K$ be a differential field with field of constants $\coC$, an algebraically closed field of zero characteristic. Consider a  differential indeterminate $u$ over $\coC$. We  call {\it formal Schr\"odinger operator} $L(u)=-\partial^2 +u$ with coefficients in the ring of differential polynomials 
$$\coC\{u\}=\coC[u,u',u'',\ldots],$$ 
where $u'$ stands for $\partial (u)$ and $u^{(n)}=\partial^n (u)$, $n\in\bbN$. In this section we will work with the formal  Schr\" odinger operator $L=L(u)$.

Let us consider the pseudo-differential operator
\begin{equation}\label{eq-recursion}
\cR=-\frac{1}{4}\partial^2+u+\frac{1}{2}u'\partial^{-1}\mbox{ and its adjoint }
\cR^*=-\frac{1}{4}\partial^2+u-\frac{1}{2}\partial^{-1}u'.
\end{equation}
Observe that $\cR^*=\partial^{-1}\cR\partial$. The operator $\cR^*$ is a recursion operator of the KdV equation (see \cite{olver1993applications}, p. 319). Applying the recursion operator $\cR$, 

we define:

\begin{equation}\label{eq-kdv}
\kdv_0:=u',\,\,\, \kdv_n:=\cR(\kdv_{n-1}),\mbox{ for }n\geq 1.
\end{equation}
Applying $\cR^*$ we define:
\begin{equation}\label{eq-fn}
v_0:=1,\,\,\, v_n:=\cR^*(v_{n-1}),\mbox{ for }n\geq 1.
\end{equation}

Hence for $n\in\bbN$ it holds
\begin{equation}\label{eq-vkdv}
2\partial(v_{n+1})=\kdv_n.
\end{equation}

As in \cite{GH, MRZ1}, we define a family of differential operators in $\coC\{u\}[\partial]$ of odd order (see also \cite{Dikii}, \cite{Novikov})
\begin{equation}\label{eq-A2s+1}
P_1:=\partial,\,\,\,P_{2n+1}:=v_{n}\partial-\frac{1}{2}\partial(v_{n})+P_{2n-1}L,\mbox{ for }n\geq 1.
\end{equation}

Observe that

\begin{equation}\label{eq-P2n+1}
P_{2n+1}=\sum_{l=0}^n\left(v_{n-l}\partial-\frac{1}{2}\partial(v_{n-l})\right)L^l.
\end{equation}
We will call the differential operators $P_{2n+1}(u)$ the {\it KdV differential operators}, \cite{MRZ1}. In \cite{MRZ1} is proved the following results.

\begin{lem}\label{lem-kdv}
The formulas for  $\kdv_n$ and $v_n$ give differential polynomials in $\coC\{u\}$.
\end{lem}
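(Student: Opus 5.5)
The plan is to treat the two families in turn. For each, the only non-polynomial ingredient in the recursion is $\partial^{-1}$, so the task is to show that $\partial^{-1}$ is always applied to a total derivative of a differential polynomial, with the integration constants fixed to be zero.

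First I reduce $\kdv_n$ to $v_n$. Using \eqref{eq-recursion},
\[
\kdv_n=\cR(\kdv_{n-1})=-\tfrac14\kdv_{n-1}''+u\,\kdv_{n-1}+\tfrac12 u'\,\partial^{-1}(\kdv_{n-1}),
\]
and \eqref{eq-vkdv} gives $\partial^{-1}\kdv_{n-1}=2v_n$. So $\kdv_n=-\tfrac14\kdv_{n-1}''+u\kdv_{n-1}+u'v_n$. Consequently, once all $v_n$ are known to lie in $\coC\{u\}$, induction on $n$ starting from $\kdv_0=u'$ shows that every $\kdv_n$ does too. Equivalently, I can just take $\kdv_n:=2\partial(v_{n+1})$.

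For $v_n$, the recursion reads $v_n=-\tfrac14 v_{n-1}''+u v_{n-1}-\tfrac12\partial^{-1}(u'v_{n-1})$. Hence the real content is that $u'v_{n-1}\in\partial(\coC\{u\})$ for every $n$; this is the main obstacle. My first approach is a generating-function argument, run as an induction on $n$ with the statement ``$v_0,\dots,v_n\in\coC\{u\}$''. Set $y=\sum_{k\ge0}v_kE^{-k}$, a formal series. By the computation preceding \eqref{def-Q-ell}, the recursion $v_{k+1}'=\cR^*(v_k)'$ is exactly the statement that $y$ solves the second symmetric power equation \eqref{eq-master-densities}:
\[
-y'''+4(u-E)y'+2u'y=0 .
\]
Multiplying this equation by $y$ shows that
\[
I:=2yy''-(y')^2-4(u-E)y^2
\]
satisfies $I'=0$, so every coefficient of $I$ in powers of $E^{-1}$ is a constant of $\coC$.

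Next I read off the coefficient of $E^{-n}$ in $I$. The $E$-term contributes $4\sum_{i+j=n+1}v_iv_j$, which equals $8v_{n+1}$ plus products of $v$'s of lower index, since $v_0=1$. The remaining terms involve only $v_0,\dots,v_n$ and their derivatives. This gives
\[
8v_{n+1}=-\sum_{i+j=n}\bigl(2v_iv_j''-v_i'v_j'-4uv_iv_j\bigr)-4\!\!\sum_{\substack{i+j=n+1\\ i,j\ge1}}\!\!v_iv_j+\kappa_n ,
\]
where $\kappa_n\in\coC$ and the right-hand side is a differential polynomial by the induction hypothesis. One point to check carefully is that the truncation is legitimate: only $v_0,\dots,v_{n+1}$ should enter this coefficient. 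That holds because the recursion fixes $v_{n+1}$ only up to an additive constant, and it is compatible with choosing that constant so that $\kappa_n=0$ (or $\kappa_n$ equal to the normalising constant, as in the Gel'fand--Dikii normalisation recalled in the Remark on \cite{gel1979integrable}). Thus $v_{n+1}\in\coC\{u\}$. Differentiating the displayed identity then exhibits $u'v_n$ as an explicit total derivative, which is precisely the integrability required above, and closes the induction.

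As a backup for the obstacle step, I would use a variational route: strengthen the induction to ``$v_n=\delta H_n/\delta u$ for a density $h_n\in\coC\{u\}$''. Translation invariance (Noether) then gives $u'\,\delta h/\delta u=\partial(\cdot)$ for any density $h$. The remaining work would be to prove, via the Lenard relation $\cD_2\,\delta H_n/\delta u=2\partial\,\delta H_{n+1}/\delta u$, that the next $v_{n+1}$ is again a variational derivative.
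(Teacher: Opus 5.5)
The paper gives no proof of this lemma. It only says it is proved in \cite{MRZ1}, so there is no in-text argument to compare yours with. Your proof is correct and self-contained.

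Your route is the classical Gel'fand--Dikii one. It rests on the quadratic first integral $I=2yy''-(y')^2-4(u-E)y^2$ of the second symmetric power equation \eqref{eq-master-densities}, which the paper mentions only in passing (up to normalisation) in its Remark citing \cite{gel1979integrable}. It has two advantages:
\begin{itemize}
\item It fixes $v_{n+1}$ algebraically, with no integration at all.
\item It proves the integrability of $u'v_n$ directly. This is exactly the case $m=0$ of Lemma~\ref{lem-v}. The paper's proof of Lemma~\ref{lem-v} presupposes the present lemma, so it could not be used here without circularity; your argument supplies that case independently.
\end{itemize}
With the paper's normalisation every $\kappa_n$ vanishes, i.e.\ $I=4E$. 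Your formula then gives $v_1=\tfrac12u$ and $v_2=-\tfrac18u''+\tfrac38u^2$. It also reproduces $\psi_3=8v_3$ exactly as listed in Section~\ref{sec-sym-power}.

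One point of presentation should be tightened. You write ``$I'=0$, so every coefficient of $I$ is a constant of $\coC$''. As phrased, this presupposes that $v_{n+1}$, indeed the whole series $y$, already lies in a differential ring whose constants are $\coC$, which is what you are proving. Your justification of the truncation via the additive constant also mixes two separate issues.

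The clean version is the one your last sentence points to. Define $v_{n+1}\in\coC\{u\}$ by $I_n=0$, where $I_n$ is the $E^{-n}$-coefficient of $I$. Then use the purely algebraic identity
\[
\partial(I_n)=-2\sum_{i+k=n}v_i\,e_k,\qquad e_k:=-v_k'''+4uv_k'+2u'v_k-4v_{k+1}'.
\]
This identity involves only $v_0,\dots,v_{n+1}$, because the $E^{1}$-coefficient $-4v_0'$ of the equation vanishes. Since $v_0=1$ and $e_0=\dots=e_{n-1}=0$ by induction, it forces $e_n=0$, that is,
\[
u'v_n=-2\,\partial\!\left(v_{n+1}+\tfrac14 v_n''-u\,v_n\right).
\]
Hence $\cR^*(v_n)=v_{n+1}$ with the natural choice of integration constant, and the induction closes.

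Your reduction of $\mathrm{kdv}_n$ to $v_{n+1}$ via \eqref{eq-vkdv} is fine. The variational backup is unnecessary; its Lenard step is where all the real work would lie.
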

Next, we define the total derivative of a differential polynomial following the definition given in \cite{olver1993applications}. For $P=P(x,u,u' ,u'',\dots)\in \coC \{ u\}$ its total derivative is
\begin{equation}
    \D P = \dfrac{\partial P}{\partial x}
    +
    \sum_{k=0}^n u^{(k+1)} \dfrac{\partial P}{\partial u^{(k)}}
\end{equation}
where $\partial / \partial x $ is the given derivation in the ring of differential polynomials. Following \cite{Kolchin1973}, and with a slight abuse of notation, we write $\partial $ instead of $\D$  whenever the meaning is clear from the context.

\begin{lem}\label{lem-v}
The formula for  $\kdv_m v_n$ is a total derivative in $\coC\{u\}$ with $m, n$ non negative integers.
\end{lem}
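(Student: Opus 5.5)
To prove that $\kdv_m v_n$ is a total derivative in $\coC\{u\}$, I would use the identity \eqref{eq-vkdv}, $\kdv_m = 2\partial(v_{m+1})$, to rewrite the product as $2 v_n\,\partial(v_{m+1})$. The claim then becomes: for all $n,k\ge 0$, the bilinear expression $B(n,k):=v_n\,\partial(v_k)$ is a total derivative. The plan is to reduce every such product to ones whose indices are equal or differ by one. For these, the conclusion is immediate or follows from a single explicit identity.

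The base cases are direct. If $n=0$, then $v_0\partial(v_k)=\partial(v_k)$. If $k=0$, the product vanishes. If $n=k$, then $v_n\partial v_n=\tfrac12\partial(v_n^2)$. Note also that $B(n,k)+B(k,n)=\partial(v_nv_k)$, so $B(n,k)$ is a total derivative if and only if $B(k,n)$ is. Hence it suffices to treat $k>n$.

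The key step is a transfer identity that moves one unit of index from $k$ to $n$:
\[
v_n\,\partial(v_{k+1}) \equiv v_{n+1}\,\partial(v_k) \pmod{\partial\,\coC\{u\}}.
\]
To prove it, I would write $2\partial(v_{k+1})=\kdv_k=\cR(\kdv_{k-1})=2\cR\partial(v_k)$, which gives $\partial v_{k+1}=\cR\partial v_k$. Substituting $\cR=-\frac14\partial^2+u+\frac12u'\partial^{-1}$ yields
\[
\partial v_{k+1}=-\tfrac14 v_k'''+u\,v_k'+\tfrac12 u' v_k .
\]
I would multiply this by $v_n$ and integrate by parts modulo total derivatives, using that $\cR^*=\partial^{-1}\cR\partial$ is the formal adjoint of $\cR$ in the sense of the pairing $\int fg$. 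Concretely, $-\frac14 v_nv_k'''\equiv \frac14 v_n''' v_k$, and $u v_nv_k'+\frac12u'v_nv_k\equiv -(uv_n)'v_k+\frac12u'v_nv_k = -u v_n' v_k-\frac12 u' v_n v_k$. Collecting terms gives
\[
v_n\partial v_{k+1}\equiv -v_k\Bigl(-\tfrac14 v_n'''+u v_n'+\tfrac12 u'v_n\Bigr)=-v_k\,\partial v_{n+1}\equiv v_{n+1}\partial v_k .
\]
The last congruence uses the swap rule $B(k,n+1)\equiv -B(n+1,k)$. Iterating the transfer identity reduces $B(n,k)$ with $k>n$ to $B(n',k')$ with $k'-n'\in\{0,1\}$. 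Throughout, every expression stays in $\coC\{u\}$, because Lemma \ref{lem-kdv} guarantees that each $v_j$ is a differential polynomial; in particular the nonlocal $\partial^{-1}$ never appears in the final expressions.

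The remaining case, $B(n,n+1)=v_n\partial v_{n+1}$, is where I expect the real obstacle, since it does not reduce to the diagonal. The transfer identity sends it to $v_{n+1}\partial v_n\equiv -v_n\partial v_{n+1}$, which only shows it is congruent to its own negative; working over characteristic zero, this yields $2B(n,n+1)\equiv 0$, hence $B(n,n+1)\equiv 0$. I would double-check this carefully, because the pattern of signs in the transfer identity is exactly what makes the argument work. If the sign turns out wrong, my fallback is to invoke the Gel'fand--Dikii resolvent identity. Its coefficients $v_j$ satisfy the quadratic first integral obtained by multiplying \eqref{eq-master-densities} by $\psi$, and expanding that integral in powers of $E^{-1}$ gives a direct expression of $\sum_{i+j=N} v_i\partial v_{j}$-type sums as explicit total derivatives.
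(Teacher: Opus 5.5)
Your proof is correct. It rests on the same identity as the paper's proof, but you iterate it in the opposite direction, which makes your route longer than necessary.

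\textbf{Your key step.} Your transfer identity $v_n\,\partial v_{k+1}\equiv v_{n+1}\,\partial v_k$ holds. Set $K:=\cR\partial=-\frac14\partial^3+u\partial+\frac12u'$, so that $\partial v_{k+1}=K v_k$. The identity then follows from the skew-adjointness of $K$ combined with that of $\partial$, and your sign bookkeeping is right. Hence the near-diagonal step $2\,v_n\partial v_{n+1}\equiv 0$ legitimately closes the argument in characteristic zero, and the Gel'fand--Dikii fallback is not needed.

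\textbf{The paper's route.} The paper uses the same mechanism in pseudodifferential form, $p\,\cR(q)\equiv q\,\cR^*(p)$. Taking $p=1$, $q={\kdv}_m$ and iterating gives ${\kdv}_m v_n={\kdv}_m(\cR^*)^n(1)\equiv\cR^n({\kdv}_m)={\kdv}_{m+n}=2\partial v_{m+n+1}$. In your notation this reads $v_n\partial v_{m+1}\equiv v_{n-1}\partial v_{m+2}\equiv\cdots\equiv v_0\,\partial v_{m+n+1}$. That is, it moves index from the first factor to the second until it reaches your trivial base case $n=0$. This needs no swap rule, no parity split and no halving. Had you run your identity that way, the ``real obstacle'' you anticipated would not arise.

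\textbf{What your local version buys.} Every step in your argument is an integration by parts of differential polynomials. The paper's one-liner silently requires each $\partial^{-1}$ in the adjointness identity for $\cR,\cR^*$ to land back in $\coC\{u\}$. This is true by Lemma~\ref{lem-kdv}, but the paper does not say so. Your argument also makes the Lenard-type structure explicit: $\partial v_{k+1}=Kv_k$ with $\partial$ and $K$ both skew-adjoint. This is the mechanism behind the involutivity $\{H_n,H_m\}=0$ that the paper invokes later.
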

\begin{proof} Since $\cR$ and $\cR^*$ are adjoint operators we have $p\cR (q)=q\cR^*(p)+\partial (a)$, $p,q,a\in C\{u\}$. Thus for $p=1$ and $q=\kdv_m$ we get
\[\cR^{n}(\kdv_m )=\kdv_m (\cR^*)^n (1)+\partial (a_{n,m}),\,\,\,\mbox{ for } a_{n,m}\in C\{u\}.\]
Then $ \kdv_{n+m} =\kdv_m  v_n +\partial (a_{n,m} )$ which implies that $\kdv_m  v_n  = \partial ( 2 v_{n+m+1} - a_n )$, hence it is a total derivative in $\coC\{u\}$.
\end{proof}

\begin{lem}\label{lem-fs}
For $n\in \bbN$ it holds $[P_{2n+1},L]=\kdv_n$.
\end{lem}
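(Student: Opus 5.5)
The plan is to argue by induction on $n$, using the recursive definition \eqref{eq-A2s+1} of the KdV operators together with the identity \eqref{eq-vkdv}. This reduces each step to a commutator of a first-order operator with $L$.

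\textbf{Base case.} For $n=0$ we have $P_1=\partial$, hence
\[
[P_1,L]=[\partial,-\partial^2+u]=u'=\kdv_0 .
\]

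\textbf{Inductive step.} Assume $[P_{2n-1},L]=\kdv_{n-1}$, and write $B_n:=v_n\partial-\frac12\partial(v_n)$. Since $L$ commutes with itself,
\[
[P_{2n+1},L]=[B_n,L]+[P_{2n-1}L,L]=[B_n,L]+[P_{2n-1},L]\,L=[B_n,L]+\kdv_{n-1}L .
\]
By \eqref{eq-vkdv}, $\kdv_{n-1}=2v_n'$, so the last term is the operator $2v_n'L$.

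\textbf{Computing $[B_n,L]$.} This is a short operator calculation. For a function $f$ one has $[f\partial,\partial^2]=-2f'\partial^2-f''\partial$ and $[g,\partial^2]=-2g'\partial-g''$. Applying these with $f=v_n$ and $g=-\frac12 v_n'$, the $\partial$-terms cancel and we get
\[
[B_n,-\partial^2+u]=2v_n'\partial^2+v_nu'-\tfrac12 v_n''' .
\]
Now substitute $\partial^2=u-L$ to obtain
\[
[B_n,L]=-2v_n'L+\bigl(2uv_n'+u'v_n-\tfrac12 v_n'''\bigr).
\]
The term $-2v_n'L$ cancels exactly against $\kdv_{n-1}L=2v_n'L$ from the inductive hypothesis. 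Hence $[P_{2n+1},L]$ is the zeroth-order operator (multiplication by a function)
\[
2uv_n'+u'v_n-\tfrac12 v_n''' .
\]

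\textbf{Identifying the remainder with $\kdv_n$.} From \eqref{eq-recursion}, $\cR\partial=-\frac14\partial^3+u\partial+\frac12u'$ is a genuine differential operator, so no $\partial^{-1}$ ambiguity arises. Therefore
\[
\kdv_n=\cR(\kdv_{n-1})=\cR(2\partial v_n)=2\,\cR\partial(v_n)=-\tfrac12 v_n'''+2uv_n'+u'v_n ,
\]
which is exactly the remainder computed above. This completes the induction.

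The only delicate point is bookkeeping: the cancellation of the second-order terms requires writing $\partial^2=u-L$ consistently on the correct side, and $\kdv_n$ must be expressed through $\cR\partial$ rather than $\cR$ alone, so that the integration constants hidden in $\partial^{-1}$ play no role.
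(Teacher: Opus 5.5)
Your induction is correct. The $\partial$-terms in $[B_n,L]$ cancel, and writing $\partial^2=u-L$ produces $-2v_n'L$, which cancels against $\kdv_{n-1}L=2v_n'L$. The remaining multiplication operator $2uv_n'+u'v_n-\frac12 v_n'''$ is indeed $\kdv_n$; at $n=1$, with $v_1=\frac12u$, it gives $\frac32uu'-\frac14u'''$, as it should. The paper does not prove this lemma at all: it only attributes it to \cite{MRZ1}. Your argument therefore supplies a self-contained verification directly from \eqref{eq-A2s+1}, \eqref{eq-recursion} and \eqref{eq-vkdv}.

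One small refinement concerns your remark that ``no $\partial^{-1}$ ambiguity arises''. That remark conflates composing operators, where $\cR\partial$ is a genuine differential operator, with applying $\cR$ to the function $\kdv_{n-1}=2v_n'$, where an antiderivative of $2v_n'$ must still be chosen. A cleaner way to close the step is to note that the remainder equals $2\partial\bigl(\cR^*(v_n)\bigr)=2\partial(v_{n+1})$. This holds because $\partial\cR^*=\cR\partial$ and $\partial$ annihilates any integration constant hidden in $v_{n+1}$. Then \eqref{eq-vkdv} at level $n$ gives $\kdv_n$ immediately, using exactly the normalization of $\kdv_n$ that the paper fixes through \eqref{eq-vkdv}.
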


Let observe that the operators $P_{2n+1}$ have the important property that the commutator  $[P_{2n+1},L]$ is a differential operator in $\coC\{u\}[\partial]$ but Lemma \ref{lem-fs} shows that it has order zero, it is the multiplication operator by the $\kdv_n$ differential polynomial. This is the famous Lax representation of $\kdv_n$, see \cite{GH, Novikov}.

Now let us consider algebraic indeterminates $c_n$, $n\geq 1$ over $C$. We
define an extended family of {\it KdV differential polynomials} $\KdV_n(u,c^n)$, $n\in\bbN$ in the differential indeterminate $u$ and the list of algebraic indeterminates $c^n=(c_1,\ldots ,c_n)$.
\begin{equation}
\begin{array}{rl}\label{eq-KdV}
 \KdV_0 = \KdV_0 (u,c^0 )     & :=\kdv_0 = u', \\
  \KdV_n (u,c^n )   & :=\kdv_n+\sum_{l=0}^{n-1} c_{n-l} \kdv_l, \quad \mbox{ for } n\geq 1 .
\end{array}
\end{equation}
Then, we have
\begin{equation}\label{eq-KdV-R}
   \KdV_n (u,c^n ) = \cR (  \KdV_{n-1} (u,c^{n-1} )  ) +c_n \kdv_0 .
\end{equation}
We can construct an extended family of {\it KdV differential operators} whose coefficients are differential polynomials in $u$ and $c^n$,
\begin{equation}\label{eq-Ac2s+1}
\hat{P}_1:=\partial\mbox{ and }\hat{P}_{2n+1}:=P_{2n+1}+\sum_{l=0}^{n-1} c_{n-l}P_{2l+1}, \mbox{ for } n\geq 1.
\end{equation}
One can easily check that
\begin{equation*}\label{eq-PLKdV}
[\hat{P}_{2n+1},L]=\KdV_n=2\partial(f_{n+1}),
\end{equation*}
for
\begin{equation}\label{eq-fn}
f_0:=v_0=1\mbox{ and } f_n:=v_n+ \sum_{l=0}^{n-1} c_{n-l}v_l, \mbox{ for } n\geq 1.
\end{equation}
Accordingly to \eqref{eq-fn}, we get the following equlity
\begin{equation*}\label{eq-fn-R}
     f_{n+1} = \cR^* ( f_n )+ c_{n +1} f_0 .
\end{equation*}

In particular, for $n=1$ and the  algebraic indeterminate  $c_0$, we obtain the family

\[\begin{array}{rl}
     \KdV_1 =&  \kdv_1+ c_{0} \kdv_0 =-\frac{1}{4} u'''+\frac{3}{2} u u' + c_{0} u' \\
    \hat{P}_{3} = & P_{3}+ c_{0}P_{1}= -\partial^3+\frac{3}{2} u \partial +\frac{3}{4}u' + c_{0}\partial .
\end{array}
\]

The KdV hierarchy in dimension 1+1 is defined by the following family of formulas.
\begin{equation}\label{eq-kdv-1+1}
    u_{t_n } = \kdv_n \quad , \quad n\in \bbN.
\end{equation}
An equivalent presentation is obtained by using the operators $P_{2n+1}$ defined in \eqref{eq-A2s+1} :
\begin{equation}
L_t = [\partial_t , L]=    [{P}_{2n+1},L].
\end{equation}\label{eq-P-kdv-1+1}
where de variables are assumed to be $(x,t)$ and the derivations $\Delta= \{ \partial=\partial_x , \partial_t \}$ with relations
\[
[\partial, x]=1 \quad , \quad [\partial_t ,t] =1 \ .
\]

\bigskip

\centerline{\rule{5cm}{0.4pt}}

\bigskip
Now we introduce some definitions and notation. The ring of pseudodifferential operators with coefficients in a differential ring A. 
\begin{equation*}
    A(( \D^{-1} ))= \left\{ \sum_{i=-\infty}^n a_i \D^{i} \ | \ a_i \in A \ , \ n\in \bbZ
    \right\} ,
\end{equation*}
where multiplication is defined by the formula
\begin{equation*}
    \D^n a -a\D^n = \sum_{n=1}^{\infty}n\cdot (n-1) \cdots (n-i+1) \partial^{i} (a) \D^{n-i}.
\end{equation*}
Its subset $ A [ \D ]= \left\{ \sum_{i=0}^n a_i \D^{i} \ | \ a_i \in A \ , \ n\in \bbN    \right\}$ is the ring of differential operators with coefficients in $A$. 

\bigskip

For each $\ell \in \bbN$, $\ell\not=0$, we consider the $\Delta$ differential ideal generated by the differential polynomial $\kdv_\ell$. Let denoted by $I_\ell = [ \kdv_\ell ]$ and define the differencial ring
 \begin{equation*}
     R_\ell = R/ I_\ell \, ,
 \end{equation*}
and the quotient map $\theta_\ell : R\rightarrow  R_\ell $. Abusing notation, we denote by $\theta_\ell$ the extension of $\theta_\ell$ to the pseudodifferential operator rings.
\begin{equation*}
  \theta_\ell : R(( \D^{-1} )) \rightarrow  R_\ell (( \D^{-1} ))  
\end{equation*}
We define the ring homomorphism $\cR_\ell  :R_\ell (( \D^{-1} ))   \rightarrow R_{\ell+1 }(( \D^{-1} ))  $ as the action of the recursion operator $\cR$ on $R_\ell (( \D^{-1} )) $, since $ \kdv_{\ell+1}:=\cR(\kdv_{\ell})$. Consequently, we obtain the following commutative diagram:

\

\centerline{
\begin{tikzpicture}[
  every node/.style={minimum size=0.8cm},
  ->, >=Stealth, thick
]
  \node (A) at (0, 1.5) {$R(( \D^{-1} ))$};
  \node (B) at (3, 3)   {$R_\ell (( \D^{-1} ))$};
  \node (C) at (3, 0)   {$R_{\ell+1} (( \D^{-1} ))$};

  \draw (A) -- node[midway, above, draw=none] {$\theta_\ell$} (B);
  \draw (A) -- node[midway, below, draw=none] {$\theta_{\ell+1}$ } (C);
  \draw (B) -- node[midway, right, draw=none] {$\cR_\ell $} (C);
\end{tikzpicture}
}

\section{ On second order algebrogeometric  ordinary differential operators}\label{sec-algebro-L2}

\bigskip

Let us consider a differential operator $L$ with coefficients in a differential field $(\Sigma,\partial)$, whose field of constants $C$ is algebraically closed and of characteristic zero. There are several characterizations of algebro-geometric operator, see for instance \cite{We}. We  state next what we use as the base characterization of algebro-geometric operators for this work, the Burchnall and Chaundy Theorem \cite{BC}, {adapted from \cite{We}}.
We consider the nontrivial case of operators $L\not\in C[\partial]$.

\begin{thm}\label{thm-algegeom}
Let $L$ be an order $2$ differential operator in $\Sigma[\partial]\backslash C[\partial]$. The following are equivalent:
\begin{enumerate}
    \item $L$ is an algebro-geometric operator.
    \item There exists an operator $P$ in $\Sigma[\partial]$ of order $2n+1$, and a polynomial $f(\lambda,\mu)=\mu^2+R_{2n+1}(\lambda)$ in $C[\lambda,\mu]$, with $R_{2n+1}$ of degree $2n+1$, such that $f(L,P)=0$.
    \item There exists an operator $P$ in $\Sigma[\partial]$ of order $2n+1$, such that $[L,P]=0$.
\end{enumerate}
\end{thm}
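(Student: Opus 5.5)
The plan is to prove the cycle of implications $1\Rightarrow 3\Rightarrow 2\Rightarrow 1$. We take as definition of algebro-geometric that the centralizer of $L$ in $\Sigma[\partial]$ strictly contains $C[L]$. Write $L=-\partial^2+u$ after a gauge and change of variable; since $L\notin C[\partial]$, $u$ is nonconstant.

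\textbf{Step $1\Rightarrow 3$.} We use the Schur description of the centralizer. In $\Sigma((\partial^{-1}))$ there is a unique square root $L^{1/2}=\pm i\partial+\dots$, and every operator commuting with $L$ is a series $\sum_{k\le N} c_k L^{k/2}$ with $c_k\in C$. The even part of such an operator lies in $C[L]$. Hence if the centralizer is strictly bigger than $C[L]$, subtracting an element of $C[L]$ leaves a nonzero element $P$ with only odd powers $L^{(2n+1)/2}$. Its order is then $2n+1$, which gives 3. Concretely, with the notation of Appendix \ref{sec-KdV-polinomials}, $P$ can be taken of the form $\hat P_{2n+1}$ with $\KdV_n(u,c^n)=0$, by Lemma \ref{lem-fs}.

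\textbf{Step $3\Rightarrow 2$.} This is the Burchnall--Chaundy argument. Let $P$ of order $2n+1$ commute with $L$. For $\lambda$ an indeterminate constant, the solution space $V_\lambda$ of $(L-\lambda)y=0$ in a Picard--Vessiot extension is two-dimensional and $P$-invariant. Let $\det(P|_{V_\lambda}-\mu)=\mu^2-a(\lambda)\mu+b(\lambda)$. Its coefficients are constants, because they are computed from the Wronskian matrix and are invariant under the derivation. They are polynomial in $\lambda$, as one sees by computing with the matrix of $P$ reduced modulo $L-\lambda$ in the basis $1,\partial$.

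We then replace $P$ by $P-\tfrac12 a(L)$, which has the same odd order. After this $a=0$ and $f(\lambda,\mu)=\mu^2+R(\lambda)$ with $R=b$. The operator $f(L,P)=P^2+R(L)$ annihilates every $V_\lambda$, so it is zero, since a nonzero operator cannot kill solutions of $L-\lambda$ for infinitely many $\lambda$. Comparing leading orders, $2(2n+1)=2\deg R$ fails parity unless the leading terms cancel. This forces $\deg R=2n+1$, with $R$ of exact degree $2n+1$ because $\operatorname{ord}P^2=4n+2=\operatorname{ord}L^{2n+1}$.

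\textbf{Step $2\Rightarrow 1$.} If $f(L,P)=0$ with $\operatorname{ord}P$ odd, then $P\notin C[L]$, since every element of $C[L]$ has even order. Also $P^2=-R(L)$ commutes with $L$. The delicate point is to conclude $[P,L]=0$ and not merely $[P^2,L]=0$. For this, I would note that in $\Sigma((\partial^{-1}))$ the centralizer of $P^2$ equals the centralizer of $(P^2)^{1/(4n+2)}$, which equals the centralizer of $L^{1/2}$ up to a constant multiple. Therefore $P$ commutes with $L^{1/2}$ and hence with $L$. So the centralizer of $L$ strictly contains $C[L]$, and $L$ is algebro-geometric.

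I expect the main obstacle to be the rigorous polynomiality and constancy of $a(\lambda)$ and $b(\lambda)$ in Step $3\Rightarrow 2$. I would handle it by working over $\Sigma(\lambda)$ and using the division $P=Q(L-\lambda)+p_1(\lambda)\partial+p_0(\lambda)$, which gives the trace and determinant explicitly. This is the classical Burchnall--Chaundy computation, which I would cite from \cite{BC,We} rather than reprove in full.
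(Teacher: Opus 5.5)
Your cycle $1\Rightarrow 3\Rightarrow 2\Rightarrow 1$ is the classical Burchnall--Chaundy argument, combined with Schur's description of centralizers in $\Sigma((\partial^{-1}))$; the paper gives no proof of its own and simply cites \cite{BC} and \cite{We}, so your approach is essentially the paper's. The one assertion you leave unjustified is that the even part $\sum_j c_{2j}L^{j}$ of a commuting operator lies in $C[L]$ (a priori it may contain negative powers of $L$ and need not be a differential operator); you can bypass it with an elementary descent --- an operator commuting with $L=-\partial^2+u$ has constant leading coefficient, an even-order one can be lowered by subtracting a constant multiple of $L^m$, and an order-zero one must be a constant, so a nonzero odd-order element exists whenever $\mathcal{C}(L)\neq C[L]$ --- and likewise in Step $3\Rightarrow 2$ you should record that $\deg a\le n$ (which follows from the division by $L-\lambda$), so that $P-\tfrac12 a(L)$ really keeps order $2n+1$.
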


In the case of second order differential operators we would like to highlight the structure of the centralizer $\cC(L)$ of $L$ in the ring of differential operators $\Sigma[\partial]$ stablished in \cite{MRZ2}. 

\begin{thm}\label{thm-agCen}
Let $L$ be a second order  differential operator in $\Sigma[\partial]$. The following are equivalent:
\begin{enumerate}
    \item $L$ is an algebro-geometric operator.
    \item The centralizer of $L$ is nontrivial, $\cC(L)\neq C[L]$. More precisely, there exists {a unique monic operator $A_{2s+1}$ of minimal order $2s+1$ such that $\cC(L)=C[L,A_{2s+1}]$ and $A^2_{2s+1}+R_{2s+1}(L)=0$, with $R_{2s+1}(\lambda)$ in $C[\lambda]$ of degree $2s+1$.}
\end{enumerate}
\end{thm}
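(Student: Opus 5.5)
The natural route is to pass through Theorem~\ref{thm-algegeom} and to use the structure of the order filtration on $\cC(L)$. The basic tool I will establish first is the following. If $Q\in\Sigma[\partial]$ has order $m$ and $[L,Q]=0$ with $L=-\partial^2+u$, then comparing the coefficients of $\partial^{m+1}$ in $[L,Q]$ forces the leading coefficient $q_m$ of $Q$ to satisfy $2q_m'=0$. Hence $q_m\in C$.

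From this I will deduce Amitsur's dichotomy for each order $m$. Suppose $Q_1,Q_2\in\cC(L)$ both have order $m$. Then $Q_1-(q_{1,m}/q_{2,m})Q_2$ lies in $\cC(L)$ and has order $<m$. Consequently the set of orders occurring in $\cC(L)$, together with constant leading coefficients, controls everything through a descending-order reduction. The same reasoning also gives that $\cC(L)$ is commutative. Indeed, $[Q_1,Q_2]$ commutes with $L$, and a leading-term comparison shows that its order drops strictly at each step, which forces it to vanish.

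For (2)$\Rightarrow$(1), I take $P\in\cC(L)\setminus C[L]$. If $P$ has even order $2k$, I replace $P$ by $P-cL^k$ with $c$ chosen to cancel the leading term. This strictly lowers the order and keeps $P\notin C[L]$. Iterating, I must eventually reach an element of odd order in $\cC(L)$. Otherwise the process would terminate at a constant, which would put the original $P$ in $C[L]$. Criterion (3) of Theorem~\ref{thm-algegeom} then gives that $L$ is algebro-geometric.

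For (1)$\Rightarrow$(2), Theorem~\ref{thm-algegeom} provides an odd-order operator commuting with $L$. I let $2s+1$ be the minimal odd order occurring in $\cC(L)$ and pick $A$ of that order, normalized to be monic; this is possible because its leading coefficient is a nonzero constant.

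Generation is shown by induction on the order of an arbitrary $Q\in\cC(L)$. If the order is even, $2k$, I subtract $cL^k$. If the order is odd, $2k+1$, then $k\ge s$ by minimality, and I subtract $cL^{k-s}A$. In both cases the order drops, and the process ends in $C$. Hence $\cC(L)=C[L,A]$.

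Next I treat $A^2$, which lies in $\cC(L)$ and has even order $4s+2$. Running the same reduction gives $A^2=-R(L)+p(L)A$ with $R,p\in C[\lambda]$. Order counting gives $\deg R=2s+1$, with nonzero leading coefficient, and $\deg p\le s$. Replacing $A$ by $A-\tfrac12 p(L)$ keeps $A$ monic of order $2s+1$, because $\deg p(L)\le 2s$. It also kills the linear term, leaving $A^2+R(L)=0$.

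For uniqueness, suppose $A'$ is another monic operator of order $2s+1$ satisfying such a relation. Then $A'=A+q(L)$ with $\deg q\le s$, by the reduction together with minimality. Squaring gives $A'^2=A^2+2q(L)A+q(L)^2$. Since $1$ and $A$ are independent over $C[L]$ (orders have different parity), the requirement $A'^2\in C[L]$ forces $q=0$.

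I expect the main obstacle to be the foundational step: the constancy of leading coefficients, and above all the commutativity of $\cC(L)$. These facts are what make the reductions above well defined and make "independence over $C[L]$" meaningful. I would first try the direct leading-term computation for $[L,Q]$, and then prove commutativity via the order-drop argument.

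I would also make explicit the interpretation that "unique" means unique under the normalization $A^2\in C[L]$. Without this normalization, $A$ is only determined modulo $C[L]$ terms of order at most $2s$.
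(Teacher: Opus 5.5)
The paper gives no proof of this statement; it is quoted from \cite{MRZ2}. The tool the paper uses for the analogous third-order statement in Appendix~\ref{sec-Order-3} is Goodearl's generalized Schur theorem. In order two, that theorem identifies the centralizer of $L$ among pseudodifferential operators with the series $\sum_{j\le m}c_jL^{j/2}$, $c_j\in C$. This makes commutativity immediate and shows that $\cC(L)$ is a free $C[L]$-module of rank $1$ or $2$, with basis $\{1,A_{2s+1}\}$ in the nontrivial case.

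Your argument is a correct and more elementary alternative. It works whenever the leading coefficient of $L$ is a nonzero constant; the first-order term of $L$ does not affect the $\partial^{m+1}$-coefficient of $[L,Q]$. That is exactly the setting in which ``monic'' makes sense. In order two, constancy of leading coefficients plus the parity of orders does the work of the rank computation. Your descending reduction, the shift $A\mapsto A-\tfrac12 p(L)$ and the uniqueness argument are all sound. After the shift, $R$ becomes $R-\tfrac14p^2$, which still has degree $2s+1$ because $\deg p\le s$.

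One part of your plan should be dropped or reordered: the separate ``order-drop'' proof that $\cC(L)$ is commutative. As written it is not an argument. A leading-term comparison only bounds $\operatorname{ord}[Q_1,Q_2]$, and nothing in it iterates to force $[Q_1,Q_2]=0$.

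You also do not need it, so it is not the main obstacle you expect. The descending reduction uses only that $\cC(L)$ is a $C$-subalgebra of $\Sigma[\partial]$ whose elements have constant leading coefficients. The independence of $1$ and $A$ over $C[L]$ is a parity-of-order statement. The expansions of $(A-\tfrac12p(L))^2$ and $(A+q(L))^2$ use only $[A,L]=0$. So prove $\cC(L)=C[L]+C[L]A$ first; commutativity of $\cC(L)$ then follows, since $L$ and $A$ commute.

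This is also where order two is special. For the third-order $M$ of Appendix~\ref{sec-Order-3}, the same reduction yields $C[M]+C[M]A_1+C[M]A_2$. There $[A_1,A_2]=0$ is not automatic, and that is what the Goodearl/Schur route supplies.
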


Moreover, for the algebrogeometric operator Schr\"odinger $L_s = -\partial^2+u_s$, the following results are valid.
\begin{thm}
 Given $L_s = -\partial^2+u_s$, the following statements are equivalent.
\begin{enumerate}
    \item {\it $L_s$ is algebro-geometric.}
    \item {\it There exists a unique monic operator $A_{2s+1}$ of minimal order $2s+1$  such that $\cC(L_s)=C[L_s,A_{2s+1}]$ and $A^2_{2s+1}+R_{2s+1}(L_s)=0$, with $R_{2s+1}(\lambda)$ in $C[\lambda]$ of degree $2s+1$.}
    \item {\it $u_s$ is a $KdV$-potential of KdV level $s$ (i.e. it satisfies one of the $\KdV_s$ equations of the KdV-hierarchy, see Appendix \ref{sec-KdV-polinomials}).}
\end{enumerate}  
\end{thm}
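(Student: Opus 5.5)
The plan is to prove the cycle of implications $1\Rightarrow 2\Rightarrow 3\Rightarrow 1$, using Theorem \ref{thm-agCen} for the centralizer structure and Lemma \ref{lem-fs} for the Lax representation of the KdV hierarchy.

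\textbf{From 1 to 2.} This is Theorem \ref{thm-agCen} applied to $L=L_s$. Since $L_s$ is a second order operator in $\Sigma[\partial]$, that theorem gives directly the unique monic operator $A_{2s+1}$ of minimal odd order with $\cC(L_s)=C[L_s,A_{2s+1}]$ and $A_{2s+1}^2+R_{2s+1}(L_s)=0$. So this step requires no new argument.

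\textbf{From 2 to 3.} Let $A_{2s+1}$ be the minimal monic element of odd order $2s+1$ commuting with $L_s$.
\begin{enumerate}
\item Build the comparison operator $\hat{P}_{2s+1}=P_{2s+1}+\sum_{l=0}^{s-1}c_{s-l}P_{2l+1}$ from \eqref{eq-Ac2s+1}, with constants $c_j\in C$ still to be fixed. Normalise signs so that its leading coefficient matches that of $A_{2s+1}$.
\item Consider the difference $A_{2s+1}-\hat{P}_{2s+1}$, which has lower order. Kill its leading coefficient inductively by subtracting constant multiples of the $P_{2l+1}$ and of the operators $P_{2l+1}L_s^k$.
\item To do this, use the classical fact that every coefficient of an operator commuting with $L_s$ up to a multiplication operator is determined, up to additive constants, by the Gel'fand--Dikii recursion $v_n=\cR^*(v_{n-1})$. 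The coefficient-matching in $[A,L_s]=0$ is exactly the recursion \eqref{recu-1}.
\item Any even-order remainder can be absorbed into $C[L_s]$.
\item This yields $A_{2s+1}=\hat{P}_{2s+1}+p(L_s)$ for some $p\in C[\lambda]$, after possibly adding constants.
\item Then $0=[A_{2s+1},L_s]=[\hat{P}_{2s+1},L_s]=\KdV_s(u_s,c^s)$ by Lemma \ref{lem-fs} and \eqref{eq-KdV}. Hence $u_s$ is a KdV potential of level $s$.
\end{enumerate}

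\textbf{From 3 to 1.} If $\KdV_s(u_s,c^s)=0$, then Lemma \ref{lem-fs} gives $[\hat{P}_{2s+1},L_s]=0$. This is a nonconstant operator of odd order $2s+1$ commuting with $L_s$. By Theorem \ref{thm-algegeom} (item 3 implies item 1), $L_s$ is algebro-geometric.

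One subtlety remains: $s$ must agree across the three statements. The level $s$ in item 3 is taken minimal, and minimality of the order of $A_{2s+1}$ in item 2 corresponds to it. If $u_s$ satisfies a lower $\KdV_{s'}$, then $\hat{P}_{2s'+1}$ gives a lower odd-order commuting operator, contradicting the minimality in item 2.

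\textbf{Main obstacle.} The hardest step is the coefficient-matching argument in the second implication. It must show that the centralizer of $L_s$ consists exactly of $C$-combinations of the $P_{2l+1}L_s^k$ and $L_s^k$. I would try an induction on order, computing the top coefficient of $[A,L_s]$. This gives $2\partial(a_{k-1})=$ (an expression in the higher coefficients), which by \eqref{eq-vkdv} integrates within $\coC\{u\}$ to the $v_n$ up to constants. The inductive bookkeeping is where care is needed.
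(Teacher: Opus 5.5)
The paper gives no proof of this statement. It is recalled in Appendix~B without argument, as a result from the authors' earlier work (cf.\ \cite{MRZ1,MRZ2}), so there is no in-text proof to compare against. Your cycle is a sound reconstruction:
\begin{itemize}
\item $1\Rightarrow 2$ is literally Theorem~\ref{thm-agCen}.
\item $3\Rightarrow 1$ is Lemma~\ref{lem-fs} combined with Theorem~\ref{thm-algegeom}.
\item You are right that ``level $s$'' must mean the \emph{minimal} level; under the parenthetical reading of item 3, the order in item 2 would not be determined. Your minimality argument, together with the fact that every odd-order element of $C[L_s,A_{2s+1}]=C[L_s]\oplus C[L_s]A_{2s+1}$ has order at least $2s+1$, fixes $s$ for $2\Rightarrow 3$. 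The reverse direction follows by running your $2\Rightarrow 3$ argument on the generator produced by $3\Rightarrow 1\Rightarrow 2$.
\end{itemize}

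The step you flag as the main obstacle is right in spirit but misstated in two places, and it is shorter than you expect.

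First, you must not subtract $P_{2l+1}L_s^k$ with $k\ge 1$. Since $[P_{2l+1}L_s^k,L_s]={\kdv}_l\,L_s^k$ has order $2k$, doing so destroys the invariant that the remainder commutes with $L_s$ up to a multiplication operator. Correspondingly, it is not true that $\cC(L_s)$ consists of $C$-combinations of the $P_{2l+1}L_s^k$ and $L_s^k$. What is true is that the operators $A$ with $[A,L_s]$ of order $0$ are exactly the $C$-combinations of the (specialised) $P_{2l+1}$ and the $L_s^k$.

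Second, nothing needs to be ``integrated within $\coC\{u\}$'': the coefficients of $A_{2s+1}$ lie in $\Sigma$, and $u_s$ is not a differential indeterminate. Moreover, for the raw coefficients of $A=\sum a_j\partial^j$, the coefficient matching is not \eqref{recu-1}, and they are not the $v_n$. It becomes that recursion, in the form \eqref{recursion-D2}, only after writing $A=\sum_k(g_k\partial-\frac12 g_k')L_s^k+\sum_k h_kL_s^k$.

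The clean argument is a comparison with the specialised formal operators:
\begin{itemize}
\item If $D\in\Sigma[\partial]$ has order $j\ge 0$ and leading coefficient $d$, the coefficient of $\partial^{j+1}$ in $[D,L_s]$ is $2d'$. Hence $d\in C$ whenever $[D,L_s]$ is a multiplication operator.
\item Start from $D=A_{2s+1}-(-1)^sP_{2s+1}$; note that $P_{2l+1}$ has leading coefficient $(-1)^l$.
\item Repeatedly subtract $d$ times the suitably signed $P_j$ (for $j$ odd) or $L_s^{j/2}$ (for $j$ even). By Lemma~\ref{lem-fs}, the commutator with $L_s$ stays a multiplication operator, so the process ends at $D=0$.
\item This gives $A_{2s+1}=(-1)^s\hat P_{2s+1}+p(L_s)$, and hence $0=[A_{2s+1},L_s]=(-1)^s\,{\KdV}_s(u_s,c^s)$.
\end{itemize}
Only Lemma~\ref{lem-fs} and the fact that the constants of $\Sigma$ are $C$ are used.
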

Let $\Gamma_s$ be the plane algebraic curve defined by the polynomial $f(\lambda, \mu) = \mu^2 +R_{2s+1} (\lambda )$. The spectral  Picard-Vessiot field for $L_s = -\partial^2+u_s$ can be described as follows.

First, consider $K(\Gamma_s )$ the fraction field of the domain
\[
K[\lambda ,\mu ] /(f_s )
\]
with $K=\coC\{ u_s \}$. Then, it can be shown that  The field of constants of $K(\Gamma_s )$ is $\coC(\Gamma_s )$. Let $\Upsilon$ a fundamental solution of the intrinsic right factor $\partial+\sigma$ constructed as $L_s -\lambda = (-\partial-\sigma )(\partial+\sigma )$ in $K(\Gamma_s )$, \cite{MRZ1}. Moreover, we prove that $K(\Gamma_s)\langle \Upsilon_s\rangle$ is a transcendent Liouvillian extension  of $K(\Gamma_s)$, whose field of constants is the field of the curve $C(\Gamma_s)$, \cite{MRZ2}.

Next we include a basis of solutions obtained for the Lam\'e potential $u_1 = 2\wp(z)$ in \cite{MRZ2}.
\[
 \Upsilon_i (z)= \frac{{\bm \sigma}(z+(-1)^{i+1} s )}{{\bm \sigma}(s ){\bm \sigma}(z)}\; \cdot \;\exp\left((-1)^{i+1} \mu\;\bm\zeta(s)\right) \qquad , \qquad i=1,2.
\]
where $\bm \sigma$ and $\bm \xi$ are the Weierstrass sigma and Weierstrass zeta functions respectively, \cite{WW}.

\subsection{The Lam\'e solutions}\label{sec-A}  

In this section, we review  the main ideas of  Halphen (and previously from  Hermite), in the Jacobi form of the Lam\'e equation, see \cite{HALP, WW}, from the point of view of this work. 

Halphen   in \cite{HALP} considered the differential operator
\begin{equation*} 
(L-\Tilde{E} ) \phi = -\phi''+(n(n+1)\wp(z)+B)\phi,
\end{equation*}
Then, the  squared symmetrical power of the operator $L-\Tilde{E}$ defined in  \eqref{ecu-Lame} is \cite{BRONSTEIN1996}
\begin{equation}\label{eq-o2-conoidal}
(L-\Tilde{E} )^{\odot 2}     =  -\partial_{zzz} + (8\wp (z)-4 \Tilde{E} )\partial_z +4\wp'(z) . 
\end{equation}
Then, he proved that 
\begin{equation}\label{ec-aux-Lame}
 (L-\Tilde{E} )^{\odot 2} R(z)=  -R_{zzz} + (8\wp (z)-4 \Tilde{E} )R_z +4\wp'(z)  R=0 ,
\end{equation}
for $R(z)=\wp(z)-B=\wp(z)+\Tilde{E}=\wp(z)-\wp(a) $ for some  $a$, and $R(z)$ is the product of two solutions of \eqref{eq-o2-conoidal}, 
 $R(z)=\varphi_1\varphi_2$. Then, considering the quotient,
\begin{equation}\label{formula-wronskian}
   \frac{\tw(\varphi_1 ,\varphi_2 )}{\varphi_1 \varphi_2 } \ ,
\end{equation}
it is obtained that  $\Tilde{\mu}$, the Wronskian of these two solutions, $\tw (\varphi_1 ,\varphi_2 )$, must be a nonzero constant. Consequently, $\{ \varphi_1 ,\varphi_2 \}$ is a basis of solutions of \eqref{ecu-Lame}.

In \cite{HALP}, Deuxi\`eme partie, p. 469, Halphen studied the  Lam\'e equations
\begin{equation} \label{eq:hLame}
\varphi''=(2\wp(z, g_2 , g_3 )-\Tilde{e_i })\varphi ,
\end{equation}
where $\Tilde{e_i }$ are the roots of the polynomial $4X^3-g_2 X +g_3$ assumed to be distinct roots. Hence he were considering the brach points of the elliptic curve $\cE$, since $ \Tilde{e_i }=-e_i  $. It is obtained the  particular solution $\phi_i$, called {\it Lam\'e solution}, defined by
\begin{equation}\label{sol-L} 
    \varphi_i  (z) :=\sqrt{\wp(z, g_2 , g_3 ) +\Tilde{e_i }} \ , i = 1, 2, 3,
\end{equation}
because 
\begin{equation}
- \varphi_i''+(2\wp(z)-\Tilde{e_i } )\varphi_i   
=
\,{\frac {-4\,\Tilde{e_i }^{3}+{ g_2 }\,\Tilde{e_i }-{ g_3 }}{4 \left( 
\wp \left( x,{ g_2 },{ g_3 } \right) +\Tilde{e_i } \right) 
^{3/2}}} =0 .
\end{equation}
Moreover, the determinant of the Wronskian matrix of $\phi_1 , \phi_2$ is 
\begin{equation*}
   \tw ( \varphi_1 , \varphi_2 ) = 
   \,\frac {{ \wp}' \left( x,{ g_2 },{ g_3 }\right)  \left( \Tilde{e_1 } -\Tilde{e_2 } \right) }{2\varphi_1 \varphi_2 }
\not=0 .
\end{equation*}
As a consequence of the Weiestrass function  formulas, see \eqref{elliptic_homogeneity}, we obtain that $\tilde{e_i } = -e_i =- \wp (\omega_i , g_2 ,g_3 ) = \wp (\imath \cdot  \omega_i , \  g_2 , \  -g_3 )$, with $\omega_i$ the half periods of the initial Weiertrass function. Thus, the half periods are roots of the solutions, since
\begin{equation*}
    \varphi_i (\omega_i ) = \sqrt{e_i + \tilde{e_i } }=0 .
\end{equation*}

Lam\'e's solutions \eqref{sol-L} are elliptic functions with periods double of the periods of the initial $ \wp (z) $, \cite{HALP, POO}, and  they are in a quadratic extension of the field of coefficients, $ \mathbb{C} (\wp (z), \wp '(z)) $. 

\begin{rem}
 The Lam\'e solutions are the eigenfunctions of the simple spectrum, $ e_i , \  i=1,2,3 $, of the Hill problem defined by the Lam\'e equation,  see\cite{WW}. In fact,  in order to have a well-defined Hill spectral problem,  it is convenient to consider $\wp (z+i\gamma),\, \gamma>0$ fixed (for instance, $\gamma=|\omega_2|$, being $i2\omega_2$ the imaginary period), to avoid the pole of the $\wp $ function;  this gives  again a cnoidal traveling wave solution.
   
\end{rem}

\section{Third-Order Ordinary Differential Operators Revisited}\label{sec-Order-3}
We begin by recalling some preliminary results on the spectral analysis of a  third-order operator $N$, which are treated in greater depth in~\cite{RZ2024}. The problem of finding a closed-form solution for $Ny =\eta \; y$ was addressed in  \cite{RZ2024}. Next we summarize several relevant results concerning this outcome for ease of reference. 

\medskip

Let $\Sigma$ be an ordinary differential field, $\coC$ its field of constants, assumed algebraically closed of characteristic zero. Let $M$ be a third-order ordinary differential operator  in $\Sigma [\partial ]$ We denote by $\cZ ((M)) $ the centralizer of $M$ in the skew field of pseudodifferential operators $\Sigma ((\partial^{-1}))$. The {Generalized Schur's Theorem} (Goodearl, 1983, \cite{Good}) establishes the following equality.

\[\cZ ((M))=\left\{\sum_{j=-\infty}^m c_j Q^j\mid c_j\in \coC, m\in\bbZ\right\}, \ \textrm{with } Q=M^{1/3}.\]

This ring  $\cZ ((M))$ is commutative, and the centralizer  $\cZ (M)$ of $M$ in the ring of differential operators $\Sigma [\partial]$ is    
\[\cZ(M)=\cZ ((M))\cap \Sigma[\partial] .
\]
Moreover $\cZ(M)$ is a commutative differential domain whose field of fractions has transcendence degree $1$ over $\coC$. Consequently, {$Spec(\cZ(M))$} is an abstract {algebraic curve $\Tilde{\Gamma}$}, called {\it the spectral curve of $M$}. Deriving the algebraic equations that define this curve constitutes an intriguing problem in its own right. For the reader's convenience, we have included the relevant results from work \cite{RZ2024} upon which the present article draws.

\begin{thm}\label{thm-Gooddearl-forOrder3} (\cite{RZ2024})
 Given $M\in \Sigma[\partial]\backslash \coC[\partial]$, with $\cZ(M)\neq \coC[M]$, then  $\cZ(M)$ is a free $\coC [M]$-module of rank $3$. Moreover, there exists a basis $\{1,A_1,A_2\}$  of $\cZ(M)$ as a $\coC [M]$-module, where each $A_i$ is a monic operator in $\cZ(M)\backslash \coC[M]$ of minimal order 
$o_i:=\ord(A_i)\equiv i\ (mod\ 3 )$, $  i=1, 2$.
\end{thm}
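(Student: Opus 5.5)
The proof runs through the pseudodifferential centralizer described just before the statement. Since $M$ is third order, first normalize its leading coefficient to $-1$ (as for the operator $M$ of \eqref{eq-operator-M}) and fix a cube root $Q=M^{1/3}\in\Sigma((\partial^{-1}))$ with leading term $c\,\partial$, $c^3=-1$. By Goodearl's theorem, every element of $\cZ((M))$ is a Laurent series $\sum_{j\le m} c_jQ^j$ with constant coefficients, and $\cZ(M)=\cZ((M))\cap\Sigma[\partial]$. Order is therefore a valuation on $\cZ(M)$: a nonzero element of order $m$ has $c_m\neq0$, and its order is $m$. The plan is to sort elements of $\cZ(M)$ by the residue of their order modulo $3$, which is precisely what the statement of Theorem \ref{thm-Gooddearl-forOrder3} singles out.

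Step 1 (existence of $A_1,A_2$). Since $\cZ(M)\neq\coC[M]$, take $B\in\cZ(M)\setminus\coC[M]$ of minimal order. Subtracting $c\,M^k$ whenever $\ord B=3k$ leaves an element still in $\cZ(M)\setminus\coC[M]$ but of smaller order, contradicting minimality. Hence $\ord B\not\equiv0 \pmod 3$, so some residue class $i\in\{1,2\}$ is realized.

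To obtain the other class, consider $B^2$. It lies in the commutative ring $\cZ(M)$ and has order $2\ord B$, whose residue is $2i\equiv -i \pmod 3$, the other nonzero class. For each $i$, define $A_i$ as a monic element of minimal order among elements of $\cZ(M)$ whose order is $\equiv i \pmod 3$; monicity is achieved by scaling by a constant. Such elements automatically lie outside $\coC[M]$, because every element of $\coC[M]$ has order divisible by $3$.

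Step 2 (spanning). Show by induction on order that every $P\in\cZ(M)$ lies in $\coC[M]+\coC[M]A_1+\coC[M]A_2$. Suppose $\ord P\equiv i$. For $i=0$, subtract $c\,M^k$; for $i\in\{1,2\}$ with $\ord P=o_i+3k$, subtract $c\,M^kA_i$, where $k\ge0$ by the minimality of $o_i$. The constant $c$ is the ratio of the leading coefficients, and these leading coefficients are compatible because both operators, written as series in $Q$, have constant leading coefficients. The subtraction strictly lowers the order. The induction terminates at order $0$: elements of order $0$ in $\cZ((M))$ are $c_0+\sum_{j<0}c_jQ^j$, and such an element is a differential operator only if it is constant. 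In general, a differential operator in this ring has no negative-order part, so the remainder is a constant in $\coC\subset\coC[M]$.

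Step 3 (freeness). Suppose $p_0(M)+p_1(M)A_1+p_2(M)A_2=0$ with not all $p_j$ zero. The nonzero summands have orders $3\deg p_0$, $3\deg p_1+o_1$ and $3\deg p_2+o_2$, which lie in pairwise distinct residue classes modulo $3$. Hence their orders are distinct, and the summand of largest order cannot be cancelled, which is a contradiction. This proves that $\{1,A_1,A_2\}$ is a $\coC[M]$-basis of rank $3$.

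The main obstacle is the leading-term bookkeeping: justifying that order behaves as a valuation and that leading coefficients can be cancelled by constant multiples. This requires the explicit structure of $\cZ((M))$ with constant coefficients in powers of $Q$, together with the fact that $\coC$ is algebraically closed so that the cube root $Q$ exists. Since $\coC$ is algebraically closed, I would handle this by normalizing $M$ to leading coefficient $-1$ at the outset, as $M$ is in the paper, so that $Q$ has leading coefficient in $\coC$. All subsequent leading-coefficient comparisons are then in $\coC$.
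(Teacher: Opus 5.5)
Your argument is correct. It is essentially the standard Goodearl-type proof behind the citation; the paper gives no proof of its own and imports the result from \cite{RZ2024}. In outline: orders of elements of $\cZ(M)$ reduce modulo $3$ to a subgroup of $\bbZ/3\bbZ$, which is everything once $\cZ(M)\neq\coC[M]$, and monic minimal-order representatives of the two nonzero classes then give a $\coC[M]$-basis by leading-term reduction.

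The only point to repair is your opening normalization. Rescaling to leading coefficient $-1$ is harmless only when the leading coefficient of $M$ is already a nonzero constant, because then neither $\cZ(M)$ nor $\coC[M]$ changes. For a non-constant leading coefficient $a$, dividing by it changes the centralizer, and the monic conclusion actually fails. Indeed, any nonzero $B\in\cZ(M)$ of order $m$ has leading coefficient $\beta$ with $\beta^3/a^m$ constant. For example, $M=(b\partial)^3$ with $b'\neq 0$ has $b\partial\in\cZ(M)\setminus\coC[M]$, but its centralizer contains no monic element of positive order. So state constancy of the leading coefficient as a standing hypothesis, as in \cite{RZ2024} and for the paper's $M$, rather than presenting it as a reduction.
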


Therefore we obtain the following decomposition of $\cZ(M) $ as a $\coC [M]$-module 
\begin{equation}\label{eq-cen_mod}
\cZ(M)    =\coC[M]\oplus \coC[M]A_1\oplus \coC[M]A_2=\coC[M,A_1,A_2] 
\end{equation}
In recent years, ordinary differential operator centralizers have attracted special attention, particularly from a computational point of view. See, for example, the recent work of S. Rueda and A. Jiménez-Pastor \cite{JPR2026-BCideals, rueda2026classification}.

The centralizer $\cZ(M) $ can be presented as a quotient ring. In fact, the kernel of the linear map\;  $\eM: \coC [\lambda,\mu_1, \mu_2] \rightarrow \Sigma[\partial]$ defined by 
\[
\eM(\lambda)=\textcolor{blue}{M},\,\,\,  \eM(\mu_1)=\textcolor{blue}{A_1 },\,\,\, \eM(\mu_2)=\textcolor{blue}{A_2 },
\]
is the Burchnall-Chaundy ideal of $M$:
\begin{equation*}\label{def-BCL}
{\BC (M):=\Ker(\eM)=\{g\in\coC [\lambda,\mu_1, \mu_2]\mid g(M,A_1,A_2)=0\}.    }
\end{equation*}
Moreover, from \cite{RZ2024} we obtain
\begin{equation*}\label{eq-isomorfismo}
    \cZ(M)  \simeq \coC[\widetilde{\Gamma}]=\frac{\coC [\lambda, \mu_1 ,\mu_2]}{\BC(M)} \ .
\end{equation*}
The (abstract) algebraic curve $\widetilde{\Gamma} =\Spec (\cZ(M))$ is referred to as the {\it spectral curve} of $M$. This curve — which is, in general, non-planar — admits a natural description as the zero locus of three polynomials. By computing differential resultants, we obtain the system
    $$\left\{\begin{array}{l}
     f_i=\dres(M-\lambda,A_i-\mu_i),\,\,\,i=1,2,\\
     f_3^r=\dres(A_1-\mu_1,A_2-\mu_2),
\end{array}\right.$$
 furthermore, establish that  $\BC(M)=(f_1,f_2,f_3)$ is a prime ideal. This ideal can be calculated explicitly, as well as a Gröbner basis of it for a monomial order adapted to the operator $M$, \cite{JPR2026-BCideals}.

\bigskip

\noindent{\bf The intrinsic right factor}

\medskip
In the following we present the results included in the work \cite{RZ2024} to make the treatment of the factorization of a third-order algebrogeometric operator more accessible to the reader.

In  \cite{RZ2024}, it is proved that the differential ideal generated by $\BC (M)$ and denoted $[\BC(M)]$ is a prime differential ideal of $\Sigma [\lambda,\mu_1,\mu_2]$. Then we have a differential domain
\begin{equation*}\label{eq-anillocurva}
{\Sigma[\Gamma]=\frac{\Sigma [\lambda,\mu_1,\mu_2]}{[\BC(M)]}}.
\end{equation*}
Its fraction field 
{$\Sigma(\Gamma)$} is a differential field with the extended derivation.  {The greatest common right divisor $\gcrd(M-\lambda,A_1-\mu_1,A_2-\mu_2)$ in $\Sigma(\Gamma)[\partial]$} is a first order monic operator $\partial+\sigma$, and
\begin{equation}\label{eq-globalFac} 
 \gcrd(M-\lambda,A_1-\mu_1,A_2-\mu_2) = \partial+\sigma
\end{equation}
equals
${\gcrd(M-\nu,A_1-\mu_1)=\gcrd(M-\nu,A_2-\mu_2)}$ 
 divides $\gcrd(A_1-\mu_1, A_2-\mu_2))$. Moreover, assume $M=\partial^3+u_1\partial+u_0$, then we obtain
{\begin{equation*}\label{eq-intrinsic-rfactor}
    M-\nu = \left(\partial^2 -\sigma\partial+
    u_1 -2\sigma' 
    +\sigma^2 
    \right)\cdot (\partial +\sigma ) ,
\end{equation*}
in  $\Sigma(\Gamma)[\partial ]$, under the condition
. 
We call the right factor $\partial +\sigma  $ of $M-\nu$ its {\sl intrinsic right factor over $\Gamma$}.

\bibliographystyle{plain}
\bibliography{Bibliography}



\end{document}